\theoremstyle{plain}
\newtheorem{theorem}{Theorem}[section]
\newtheorem{lemma}[theorem]{Lemma}
\newtheorem{prop}[theorem]{Proposition}
\newtheorem{corollary}[theorem]{Corollary}
\newtheorem{remark}[theorem]{Remark}
\newtheorem{problem}[theorem]{Problem}
\numberwithin{equation}{section}
\begin{document}

\title{On the minimum value of the condition number of polynomials}

\author[C. Beltr\'an]{Carlos Beltr\'an}
\address{Departamento de Matem\'aticas, Estad\'{\i}stica y \\Computaci\'on,
Universidad de Cantabria. 39005. Santander, Spain}
\email{carlos.beltran@unican.es}

\author[F. Lizarte]{F\'atima Lizarte}
\address{Departamento de Matem\'aticas, Estad\'{\i}stica y \\Computaci\'on,
Universidad de Cantabria. 39005. Santander, Spain}
\email{fatima.lizarte@unican.es}


\begin{abstract}
In 1993, Shub and Smale posed the problem of finding a sequence of univariate polynomials of degree $N$ with condition number bounded above by $N$. In \cite{BEMO19} it was proved that the optimal value of the condition number is of the form $O(\sqrt{N})$, and the sequence demanded by Shub and Smale was described by a closed formula (for large enough $N\geqslant N_0$ with $N_0$ unknown) and by a search algorithm for the rest of the cases. In this paper we find concrete estimates for the constant hidden in the $O(\sqrt{N})$ term and we describe a simple formula for a sequence of polynomials whose condition number is at most $N$, valid for all $N=4M^2$, with $M$ a positive integer.
\end{abstract}


\keywords{Condition number, polynomials, spherical points}

\date{\today}

\maketitle


\section{Introduction}
\subsection{Statement of the main problem}
The condition number of a polynomial at a root is a measure for the first order variation of the root under small perturbations of the polynomial. It has different formulas and properties depending on how these changes are measured, see for example \cite{Demmel,TB}. Among the most popular and useful definitions is the one given by Shub and Smale in \cite{SS93I,SS93III}, where polynomials are first homogenized (hence the zeros lie in $\mathbb P(\mathbb{C}^2)$) and Bombieri norm is used to measure the perturbation of the polynomial. The concrete definition of Shub and Smale's {\em normalized condition number} $\mu_{\text{norm}}$ and some of its properties are recalled in a later section.

In \cite{SS93II,SS93III} it was proved that with probability at least $1/2$, (a certain choice of) random polynomials have condition number at most $N$, leading to the following:

\begin{problem}[Main Problem in \cite{SS93III}]\label{Prob_cond}
Find explicitly a family of polynomials of degree $N$ whose condition number is at most $N$.
\end{problem}
(The authors of \cite{SS93III} also relaxed the problem changing ``at most $N$'' to ``at most $N^c$ for any constant $c$, say $c=100$''.) By ``find explicitely'' they mean ``giving a handy description'' or describing a BSS algorithm --that essentially means an algorithm where exact real arithmetic is available, see \cite{BlCuShSm98}-- to solve the problem. During his plenary conference at the FoCM'14 meeting in Montevideo, Shub referred to this question as ``finding hay in the haystack'', since we know that a lot of such polynomials exist but it just turned out to be quite difficult to describe one!

The motivation of Shub and Smale was the search of a good starting polynomial to be used in homotopy methods for polynomial root finding, that is the one--dimensional case of Smale's 17th problem. Smale's 17th was finally solved without finding the solution to Problem \ref{Prob_cond} nor its high--dimensional analogous, see \cite{BP, BuCu, Lairez} or the monograph \cite{BuCubook}, leaving these questions open (see the Open Problems section in \cite{BuCubook}). Problem \ref{Prob_cond} was finally solved in \cite{BEMO19} where it was proved that:
\begin{enumerate}
  \item There exists a constant $a>0$ such that the condition number of any degree $N$ polynomial is at least $a\sqrt{N}$.
  \item There exist an explicit construction of a polynomial of any degree, given by its zeros, and a constant $b>0$ such that the condition number of the $N$--th degree polynomial is at most $b\sqrt{N}$.
  \end{enumerate}
   From \cite{Ujue} we have the concrete value $a\geqslant e^{C_{\log}}/2$ where $C_{\log}$ is defined by \eqref{eq:Clog} and bounded in \eqref{eq:Clogbound}, but the value of $b$ is not known. As a consequence, one gets an algorithm to generate a degree $N$ polynomial whose condition number is at most $N$: run in parallel a search algorithm (based on enumeration of rational zeros) and the sequence of item (2). Since computing the condition number given the zeros is immediate, and since the polynomials in the sequence of (2) eventually have a condition number smaller than $N$, this produces a polynomial time algorithm for Problem \ref{Prob_cond}.

The solution of \cite{BEMO19} is thus an algorithm to generate the demanded sequence, and it certainly solves Problem \ref{Prob_cond}, but it leaves an open question behind:
\begin{problem}[Main Problem after \cite{BEMO19}]\label{Prob_cond2}
Find an explicit formula for a family of polynomials of degree $N$ whose condition number is at most $N$. Also, find asymptotic bounds for the minimum condition number of a degree $N$ polynomial, $N\to\infty$.
\end{problem}
\subsection{Main result}
In these pages we make some partial progress in Problem \ref{Prob_cond2}. More exactly, we prove for the first part of this problem:
\begin{theorem}[Main result]\label{th:mainintro}
Let $N=4M^2,$ with $M\geqslant 1$ a positive integer. Define
\begin{equation*}
r_j=4j, \quad h_j=1-\frac{4j^2}{N},
\end{equation*}
for $1\leqslant j \leqslant M$ and consider the polynomial of degree $N$ given by
\begin{equation*}
P_N(z)=\displaystyle(z^{r_M}-1)\prod_{j=1}^{M-1}(z^{r_j}-\rho(h_j)^{r_j})(z^{r_j}-\rho(h_j)^{-r_j}),
\end{equation*}
where $\rho(x)=\sqrt{\frac{1+x}{1-x}}$. Then $\mu_{\text{norm}}(P_N)\leqslant \min(N,(19/2)\sqrt{N+1})$.
\end{theorem}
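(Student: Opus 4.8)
The starting point is the classical closed-form expression for Shub--Smale's normalized condition number of a univariate polynomial at a root: if $P_N$ is written in monic form with roots $z_1,\dots,z_N$ and $F$ denotes its homogenization, then
\[
\mu_{\mathrm{norm}}(P_N)=\max_{1\le i\le N}\frac{\sqrt{N}\,\|F\|_{W}\,(1+|z_i|^2)^{(N-2)/2}}{\prod_{k\ne i}|z_i-z_k|}=\max_{1\le i\le N}\frac{\sqrt{N}\,\|F\|_{W}\,(1+|z_i|^2)^{(N-2)/2}}{|P_N'(z_i)|},
\]
where $\|\cdot\|_{W}$ is the Bombieri--Weyl norm (this formula, or an equivalent one, being a standard consequence of the definition of $\mu_{\mathrm{norm}}$; note that $P_N$ as defined is already monic, so no leading coefficient enters). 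Everything then reduces to two estimates that have to be precise enough to leave only the target constant: a lower bound for $|P_N'(z_i)|$ uniform over all roots, and an upper bound for $\|F\|_{W}$.

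For the derivative I would use the product-of-binomials shape of $P_N$ directly. If $z_i$ is a root of the factor $z^{r_a}-\rho(h_a)^{r_a}$, so $|z_i|=\rho(h_a)\ge1$ (the case $|z_i|=\rho(h_a)^{-1}$ follows from the symmetry $z\mapsto 1/z$, which fixes the root set and preserves $\mu_{\mathrm{norm}}$; the case $|z_i|=1$ is treated the same way), then $P_N'(z_i)$ equals $r_a z_i^{\,r_a-1}$ times the product of all the remaining binomial factors evaluated at $z_i$. Each such factor has the form $z_i^{\,r}-\beta$ with $\beta>0$ and $|z_i^{\,r}|=\rho(h_a)^{r}$, so the reverse triangle inequality gives $|z_i^{\,r}-\beta|\ge|\rho(h_a)^{r}-\beta|$, a bound that does not depend on the angular position of $z_i$ on its circle. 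Using $\rho(h_j)^2=(2M^2-j^2)/j^2$ and $r_j=4j$, this yields a lower bound $|P_N'(z_i)|\ge L(M,a)$ with $L(M,a)$ an explicit function of $M$ and of the circle index $a$ alone. The subtle point is that $L(M,a)$ must not be far below the true value; this is where the choice $r_j=4j$ matters, since with this many points on the $j$-th circle the ratios $\bigl(\rho(h_{j\pm1})/\rho(h_j)\bigr)^{r_{j\pm1}}$ stay bounded away from $1$, so no near-cancellation $\rho(h_a)^{r}\approx\beta$ occurs between adjacent circles and the reverse-triangle bound is tight up to a controllable factor.

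For the Weyl norm, the naive submultiplicative estimate $\|F\|_{W}\le\prod_\ell\|z^{r_\ell}-\beta_\ell\|_{W}=\prod_\ell\sqrt{1+\beta_\ell^2}$ loses a super-exponential multinomial factor and is useless, so one must work with $\|F\|_{W}^2=\sum_{m=0}^{N}\binom{N}{m}^{-1}|c_m|^2$ and estimate the coefficients $c_m$ of $P_N$ by hand. Two observations make this manageable. First, since every exponent $r_j=4j$ is divisible by $4$, one has $P_N(z)=Q(z^4)$ for an explicit monic polynomial $Q$ of degree $M^2$ whose roots are again scaled roots of unity on circles; together with the elementary inequality $\binom{4n}{4k}\ge\binom{n}{k}^4$ (an injection of $4$-tuples of $k$-subsets of $[n]$ into $4k$-subsets of $[4n]$) this gives $\|F\|_{W}\le\|Q\|_{W}$, reducing the effective degree by a factor $4$. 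Second, each coefficient of $Q$ is a signed sum $\sum_S(\pm)\prod_{\ell\in S}\beta'_\ell$ over index sets of prescribed weight, which may be bounded by its all-positive counterpart and then fed into $\sum_k\binom{M^2}{k}^{-1}(\cdot)$ term by term; this produces an explicit bound $\|F\|_{W}\le W(M)$.

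Combining the formula with $|P_N'(z_i)|\ge L(M,a)$, $|z_i|^2=\rho(h_a)^2$ and $\|F\|_{W}\le W(M)$, the desired estimate $\mu_{\mathrm{norm}}(P_N)\le(19/2)\sqrt{N+1}$ reduces to the single explicit inequality
\[
N\,W(M)^2\,\bigl(1+\rho(h_a)^2\bigr)^{N-2}\ \le\ \tfrac{361}{4}\,(N+1)\,L(M,a)^2 ,\qquad 1\le a\le M,
\]
in the two integer parameters $M$ and $a$. I would prove this by locating the worst value of $a$, verifying the inequality for all large $M$ after taking logarithms and comparing the sums over $j$ to integrals, and checking the finitely many remaining small $M$ by direct computation; those cases ($N\le64$, i.e.\ $M\le4$) are also exactly the ones for which $\min(N,(19/2)\sqrt{N+1})=N$, so there nothing beyond a finite verification is needed. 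I expect the real obstacle to be this last stage combined with the sharp control of $\|F\|_{W}$: pushing the absolute constant down to $19/2$ rather than to some larger value forces one to track each of the several competing exponentially large factors to enough precision that they nearly cancel, and that bookkeeping is where essentially all the effort lies.
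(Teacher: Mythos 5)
Your overall frame (the planar formula \eqref{eq:mu}, a lower bound on $|P_N'(z_i)|$ factor by factor via the reverse triangle inequality, an upper bound on the Bombieri--Weyl norm) is reasonable, and the derivative part is essentially the same computation the paper does on the sphere (Lemma \ref{lem:xey} gives exactly $|z_i^{r}-\beta|\geqslant |\rho(h_a)^{r}-\beta|$ in spherical coordinates). But the Weyl-norm step, which is the heart of the whole problem, contains a concrete fatal loss. Every quantity in your final inequality ($\|F\|_W$, $(1+\rho(h_a)^2)^{N-2}$, $L(M,a)^2$) is of size $e^{cN}$, so the bound $W(M)$ must be sharp up to an \emph{absolute constant}; any loss $e^{\delta N}$ kills the conclusion. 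Your reduction $\|F\|_W\leqslant\|Q\|_W$ is valid (only $\binom{4n}{4k}\geqslant\binom{n}{k}$ is needed), but it replaces the weight $\binom{4n}{4k}^{-1}\approx 2^{-N}$ at central indices by $\binom{n}{k}^{-1}\approx 2^{-N/4}$, an inflation of order $2^{3N/4}$ there. The polynomial $P_N$ does have enormous central coefficients: e.g.\ the coefficient of $z^{N/2+2M}$ contains the product $\prod_{j=1}^{M-1}\rho(h_j)^{r_j}$, and a short computation (compare $\prod_j\rho(h_j)^{2r_j}$ with $\prod_j(1+|z_j|^2)e^{-N}$, which is the true order of $\|F\|_W^2/(N+1)$) shows that this single term, weighted with the correct $\binom{N}{m}^{-1}$, is already of the order of the whole norm; after your weight replacement it exceeds the true norm by a factor growing like $e^{N/2}$. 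Already at $M=2$ one sees it numerically: $P_{16}=(z^8-1)(z^4-49)(z^4-1/49)$ has $\|F\|_W^2\approx 4.6$, while $\|Q\|_W^2\approx 1.2\cdot 10^3$. On top of this, ``bound each coefficient by its all-positive counterpart and sum term by term'' is not an argument that produces a constant-sharp $W(M)$; it is precisely the delicate cancellation bookkeeping that needs a method, and tracking it coefficientwise would amount to redoing, in combinatorial disguise, what the paper does analytically.

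The paper never expands coefficients at all. It uses the Shub--Smale identity \eqref{exp_cond_S2}, in which the Weyl norm appears (up to explicit factors) as $\bigl(\int_{\mathbb S}\prod_{j}|p-p_j|^2\,d\sigma(p)\bigr)^{1/2}$ over the spherical images of the zeros, and bounds the integrand pointwise: for each $q\in\mathbb S$ it compares the integral of $\log|p-q|$ over each band with the mean value over the central parallel (Lemmas \ref{lem:nuevocotaptomedio} and \ref{comp_paral_band_p}), shows $S_N(q)+N\kappa$ is bounded by explicit constants plus $\tfrac13\log(M/\ell)$ (Proposition \ref{Cociente_Integral_cond_previo}), controls the cross terms $\log|1+y^{r}/x^{r}|$ by summable geometric-type series (Proposition \ref{ImpCond_cota_num}), and then integrates the resulting pointwise bound band by band. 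The matching lower bound for $\prod_{j\neq i}|p_i-p_j|$ (Proposition \ref{ImpCond_cota_denom}) uses the same $S_N$ so that the exponential factors $e^{\pm\kappa N}$ cancel exactly, leaving only $\sqrt{N}$ and numerical constants; small degrees $M\leqslant 4$ are checked by computer, exactly as you anticipated. If you want to salvage your route, you must replace the coefficientwise Weyl-norm estimate by something equivalent to this pointwise integral bound (or by the integral representation of $\|F\|_W$ itself); as written, the step $\|F\|_W\leqslant W(M)$ cannot be made tight enough for the inequality $N\,W(M)^2(1+\rho(h_a)^2)^{N-2}\leqslant\frac{361}{4}(N+1)L(M,a)^2$ to hold.
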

The first three polynomials of our sequence can be seen in Table \ref{table:tabla_poly}.

\begin{table}[htbp]
\begin{center}
\begin{tabular}{|l|l|l|}
\hline
$M$ & $N$ & Polynomial  \\
\hline \hline
1 & 4  &	$z^4-1$		\\ \hline
2 & 16 &	$(z^8-1)(z^4-49)(z^4-1/49)$		\\ \hline
3 & 36 &	$(z^{12}-1)(z^8-2401/16)(z^8-16/2401)(z^4-289)(z^4-1/289)$ \\ \hline
\end{tabular}
\caption{The first three polynomials of the sequence constructed in Theorem \ref{th:mainintro} corresponding to degrees 4, 16 and 36, respectively.}
\label{table:tabla_poly}
\end{center}
\end{table}

The zeros of above polynomial $P_N$ correspond, under the stereographic projection, to the spherical points of a set $\mathcal{P}_N$ described in Section \ref{Sect:PN}.
Modifying $\mathcal P_N$ slightly, one can very likely adapt our proof to similar subsequences such as, say, $N=4M^2+1$ or $N=4M^2+2M$, but solving the problem for general $N$ is still out of reach, since the explicit computations become too complicated.

Our method of proof also produces the upper bound in the following corollary, which is a first answer to the second part of Problem \ref{Prob_cond2} (the lower bound is proved by U. Etayo in \cite{Ujue} and uses a recent result by S. Steinerberger \cite{Stefan}):
\begin{corollary}\label{cor:limsinf}
The minimum condition number $\alpha_N=\inf\{\mu_{\text norm}(P):deg(P)=N\}$ of a degree $N$ polynomial satisfies
\[
0.454\ldots\leqslant \frac{e^{C_{\log}}}2\leqslant \liminf_{N\to\infty}\frac{\alpha_N}{\sqrt{N}}\leqslant \frac{\sqrt{3}}{2}e^{15/8}=5.647\ldots
\]
\end{corollary}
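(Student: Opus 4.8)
The plan is to prove the two bounds separately. The lower bound $e^{C_{\log}}/2\leqslant\liminf_{N\to\infty}\alpha_N/\sqrt N$ is not ours: I would simply invoke \cite{Ujue}, where U.~Etayo derives it by combining the second–order asymptotics of the minimal logarithmic energy on $S^2$ with Steinerberger's inequality \cite{Stefan}; the numerical value $0.454\ldots$ then follows from the lower bound on $C_{\log}$ recorded in \eqref{eq:Clogbound}. So all the work is in the upper bound. Since $\alpha_{4M^2}\leqslant\mu_{\text{norm}}(P_{4M^2})$ for the polynomials of Theorem~\ref{th:mainintro} and $\liminf_{N\to\infty}\alpha_N/\sqrt N\leqslant\liminf_{M\to\infty}\alpha_{4M^2}/(2M)$, it suffices to prove
\[
\limsup_{M\to\infty}\frac{\mu_{\text{norm}}(P_{4M^2})}{2M}\leqslant\frac{\sqrt3}{2}\,e^{15/8}.
\]
The constant $19/2$ of Theorem~\ref{th:mainintro} is a uniform estimate and is far from this limiting value, so I would reopen the computation underlying that theorem, this time tracking the leading behaviour in $M$ instead of simplifying.

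Concretely, I would use the exact formula (see \cite{BEMO19} and the references therein) valid for a degree $N$ polynomial whose zeros lie on the Riemann sphere at $\zeta_1,\dots,\zeta_N$, with $\theta_{kl}$ the distance between $\zeta_k$ and $\zeta_l$:
\[
\mu_{\text{norm}}(P)^2=\frac{N}{N+1}\cdot\frac{\sum_{k=1}^N\prod_{l\ne k}\sin^2\theta_{kl}}{\min_{1\leqslant k\leqslant N}\prod_{l\ne k}\sin^2\theta_{kl}}.
\]
For $P=P_N$ the zeros are the configuration $\mathcal P_N$ of Section~\ref{Sect:PN}: the parallels at heights $\pm h_j$ carry $4j$ equispaced points for $1\leqslant j\leqslant M-1$, and the equator carries $4M$ points. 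Two elementary product identities make every $\prod_{l\ne k}\sin^2\theta_{kl}$ explicit: the contribution of the parallel containing $\zeta_k$ (at height $h_j$) equals $(4j)^2\bigl((1-h_j^2)/4\bigr)^{4j-1}$, by $\prod_{m=1}^{n-1}\sin(\pi m/n)=n/2^{n-1}$; and the contribution of a parallel at height $h'$ with $n'$ points equals $2\bigl(\tfrac14\sqrt{(1-h_j^2)(1-h'^2)}\bigr)^{n'}\bigl(\cosh(n'\lambda)-\cos(n'\alpha)\bigr)$, where $\alpha$ is the longitude of $\zeta_k$ and $\cosh\lambda=(1-h_jh')/\sqrt{(1-h_j^2)(1-h'^2)}$, which follows from $\prod_{m=0}^{n-1}\bigl(a-b\cos(\theta+2\pi m/n)\bigr)=b^n\bigl(\cosh(n\operatorname{arccosh}(a/b))-\cos(n\theta)\bigr)/2^{n-1}$ after checking that the pairwise distances on the sphere take the form $2(1-h_jh')-2\sqrt{(1-h_j^2)(1-h'^2)}\cos(\alpha-\beta)$.

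Substituting $h_j=1-j^2/M^2$ and writing $t=j/M\in(0,1]$, the logarithm of $\prod_{l\ne k}\sin^2\theta_{kl}$ splits, after the large-$M$ reductions, into $N$ times a fixed constant plus an $O(1)$ fluctuation $W(t,\alpha)$; the constant part cancels between numerator and denominator, and what survives is governed by Riemann sums in $t$ which, by Euler--Maclaurin, converge to explicit integrals — here the identity $\tfrac1{2\pi}\int_0^{2\pi}\log(\cosh x-\cos\psi)\,d\psi=x-\log 2$ for $x>0$ does the bookkeeping for the $\cos(n'\alpha)$ terms coming from the parallels with $n'\lambda=O(1)$. One ends with a limit of the shape $\mu_{\text{norm}}(P_N)^2/N\to 2\,e^{-2W_\star}\int_0^1 t\,\mathcal A(t)\,dt$, where $\mathcal A(t)$ is the longitudinal average of $e^{2W(t,\cdot)}$ over a parallel and $W_\star$ is the minimum of $W$ over the zeros; the integrals reduce to polynomial moments in $t$ and their products with $\log(1-t^2)$, $\log(2-t^2)$ and similar factors, all elementary (for instance $\int_0^1 t\log\bigl((2-t^2)/4\bigr)\,dt=-\tfrac12$), so that the computation can be pushed to the closed value, and together with a Stirling estimate for the factor accounting for the $\sqrt{N+1}$ one obtains $\tfrac{\sqrt3}{2}e^{15/8}=5.647\ldots$. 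Combining with the cited lower bound then gives the corollary.

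The main obstacle is the passage from the discrete products $\prod_{l\ne k}\sin^2\theta_{kl}$ to these limiting integrals with an error that is genuinely $o(1)$ after taking logarithms. Three points require real work. First, near the poles ($j$ small) and near the equator ($h_j\to0$) the factors $1-h_j^2$ degenerate and the integrand of the limiting integral has an integrable singularity, so the Euler--Maclaurin remainder must be estimated by hand on $O(1/M)$-blocks of parallels at each end and shown to vanish in the limit. Second, because $\mathcal P_N$ has only a finite dihedral symmetry, $\prod_{l\ne k}\sin^2\theta_{kl}$ genuinely depends on the longitude of $\zeta_k$ through the $O(1)$ parallels with $n'\lambda=O(1)$; one must bound this within–parallel fluctuation and verify that it does not disturb $\sum_k$ or $\min_k$ in the limit beyond the explicit $\mathcal A$ and $W_\star$ (it should not, since at leading order only the $O(M)$ nearest zeros contribute and these are essentially the same for all points of a parallel). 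Third, one must locate, or at least adequately bound, the zero realising $W_\star$ — the worst–conditioned root, which I expect to be near a pole — and check that $W$ stays bounded below there, so that $e^{-2W_\star}\int_0^1 t\,\mathcal A(t)\,dt$ is the finite constant claimed; this is precisely where the qualitative $O(\sqrt N)$ bound of \cite{BEMO19}, now made quantitative, gets used. Once these three points are settled, the remaining computations are the routine evaluations of the elementary integrals above.
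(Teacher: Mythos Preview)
Your lower bound is fine—citing \cite{Ujue} is exactly what the paper does. Your opening instinct for the upper bound is also right: ``reopen the computation underlying [the theorem], this time tracking the leading behaviour in $M$ instead of simplifying.'' But you then abandon that idea and start over from a different formula, and this is where the proposal breaks down.

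First, the formula you invoke,
\[
\mu_{\text{norm}}(P)^2=\frac{N}{N+1}\cdot\frac{\sum_{k}\prod_{l\ne k}\sin^2\theta_{kl}}{\min_{k}\prod_{l\ne k}\sin^2\theta_{kl}},
\]
is not the Shub--Smale condition number. In \eqref{exp_cond_S2} the numerator is an \emph{integral} of $\prod_j|p-p_j|^2$ over the whole sphere, not a finite sum over the zeros, and the prefactor is $N(N+1)/4$. A one-line sanity check at $N=1$ already disagrees: \eqref{eq:mu} gives $\mu_{\text{norm}}=1$, your formula gives $1/\sqrt2$. Consequently the machinery you build on it—the $\cosh(n'\lambda)$ products, the Euler--Maclaurin reductions, the quantities $W(t,\alpha)$, $\mathcal A(t)$, $W_\star$—is computing the wrong object.

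Second, even with the correct formula the program is misdirected: you are trying to evaluate the \emph{exact} limit of $\mu_{\text{norm}}(P_{4M^2})/(2M)$ and assert that it equals $\tfrac{\sqrt3}{2}e^{15/8}$. There is no reason to expect this. That number is not the limiting condition number of the construction; it is merely the $M\to\infty$ limit of the explicit \emph{upper bound} produced while proving Theorem~\ref{th:mainintro}. The estimates in Corollaries~\ref{cor:C} and \ref{cor:D} have slack, so the true limit of $\mu_{\text{norm}}(P_N)/\sqrt N$—if it exists—should be strictly smaller. The paper simply retains the pre-simplified bound from Section~\ref{sec:final},
\[
\mu_{\text{norm}}(P_N)\leqslant \sqrt{\tfrac{N+1}{2}}\,e^{3/4+9/8}\Bigl(\tfrac{1}{M}\bigl(\tfrac{e}{2}\bigr)^{4/M}+\tfrac32+\tfrac{24(1-\log 2)}{M}+\tfrac{6}{M^{4/3}}\Bigr)^{1/2},
\]
and lets $M\to\infty$: the bracket tends to $3/2$, $3/4+9/8=15/8$, and the corollary follows in one line. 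No new asymptotic analysis, no Euler--Maclaurin, no locating the worst-conditioned root.
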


A related problem of great importance is that of finding the optimal constant $C_N$ in the multiterm Bombieri inequality
\[
\prod_{i=1}^N{\|z-z_i\|}\leqslant C_N \left\|\prod_{i=1}^N(z-z_i)\right\|,
\]
which compares the Bombieri-Weyl norm of a polynomial and the product of the norms of its linear factors. Finding the optimal value of $C_N$ is a formidable challenge! A recent breackthrough by U. Etayo \cite{Ujue} is that the optimal value of this constant is essentially $\sqrt{e^N/(N+1)}$. More precisely, define $K_N$ by
\[
C_N=K_N\sqrt{\frac{e^N}{N+1}}.
\]
Then, we have $\alpha\leqslant K_N\leqslant 1$ for some $\alpha>0$ which is independent of $N$. No lower bounds on $\alpha$ are known till now, but from \cite[Th. 4.5]{Ujue} and our main result above we deduce:
\begin{equation}\label{eq:KN}
\limsup_{N\to\infty}K_N\geqslant \frac{e^{C_{\log}}}{\sqrt{3}e^{15/8}}\geqslant 0.08.
\end{equation}

\subsection{Relation to well--distributed spherical points and the logarithmic energy}
We will define our sequence of polynomials by its zeros, which are in turn seen as points in the unit $2$--sphere $\mathbb{S}$ via the stereographic projection. It was noted in \cite{SS93III} that if a collection of spherical points $ p_1,\ldots,  p_N\in\mathbb{S}$ is very well distributed in the sense that it quasi--minimizes the logarithmic energy
\begin{equation}\label{Logarithmic_energy}
\mathcal{E}( p_1,\ldots,  p_N)=\displaystyle\sum_{i \neq j} \log\frac{1}{| p_i- p_j|},
\end{equation}
then the associated complex points are the zeros of a well--conditioned polynomial. More precisely, let us denote by $m_N$ the minimum possible value of the logarithmic energy,
$$
m_N=\min_{ p_1,\ldots,  p_N\in\mathbb{S}}\mathcal{E}( p_1,\ldots,  p_N).
$$
The main result of \cite{SS93III} is that if $\mathcal{E}( p_1,\ldots,  p_N)\leqslant m_N+c\log N$ then the condition number of the corresponding polynomial is at most $\sqrt{N^{1+c}(N+1)}$, thus solving the relaxation of Problem \ref{Prob_cond}. (Note that our notation is slightly different from that of \cite{SS93III}: we use the unit sphere instead of the Riemann sphere and our definition of the log--energy ranges over $i\neq j$ instead of $i<j$. Our notation is the most frequent nowadays).

Inspired by this result, Shub and Smale posed the problem of finding collections of spherical points with quasioptimal $\log$--energy. This later was included in Smale's famous list of Problems for the XXI century  \cite{Smale2000}:
\begin{problem}[Smale's 7th Problem]\label{Prob7_Smale}
Can one find $ p_1,\ldots,  p_N\in\mathbb{S}$ such that $\mathcal{E}( p_1,\ldots,  p_N) \leqslant  m_N+c\log N$ for some universal constant $c$?
\end{problem}
The value of $m_N$ is not still well known. After \cite{Wagner89,RSZ94,Dubickas96,Brauchart08,BS18}, we have
\begin{equation}\label{eq:Clog}
m_N=\kappa N^2-\frac{1}{2}N\textrm{log}N+C_{\log}N+o(N),
\end{equation}
where $C_{\log}$ is a constant and, denoting by $d\sigma$ the normalized uniform measure in $\mathbb S$,
\begin{equation}\label{eq:continuousenergy}
\kappa=\int_{x,y\in\mathbb{S}}\log\frac{1}{|x-y|}d\sigma(x)d\sigma(y)=\frac{1}{2}-\log 2<0,
\end{equation}
is the continuous energy. From \cite{Stefan} and \cite{BS18}, we have that
\begin{equation}\label{eq:Clogbound}
-0.0954\ldots \leqslant C_{\log} \leqslant 2\log2 +\frac{1}{2}\log\frac{2}{3}+3\log\frac{\sqrt{\pi}}{\Gamma(1/3)}=-0.0556\ldots
\end{equation}
The upper bound has been conjectured to be an equality, see \cite{BHS12,BS18} and the monograph \cite{BHS19} for context.
We stress that our construction of the point set for Theorem \ref{th:mainintro} does not solve Problem \ref{Prob7_Smale}: its log--energy is of the form $\kappa N^2-\frac12N\log N+O(N)$ (this can be deduced directly from \eqref{Logarithmic_energy} and Corollary \ref{cor:D} or seen as a consequence of \cite[Th. 1.5]{Ujue}).


\subsection{Condition number of polynomials}

We now give the precise definition and some properties of the condition number of polynomials. Let us consider a bivariate homogeneous polynomial with complex coefficients of degree $N\geqslant 1$,
\begin{equation*}
h(x,y)=\displaystyle\sum_{i=0}^N a_ix^iy^{N-i}, \quad a_i\in\mathbb{C}, \,\, a_N\neq 0.
\end{equation*}
The zeros of $h$ lie in the complex projective space $\mathbb{P}(\mathbb{C}^2)$.
Following \cite{SS93I}, the normalized condition number of $h$ at a zero $\zeta\in\mathbb{P}(\mathbb{C}^2)$ is
\begin{equation*}
\mu_{\text{norm}}(h,\zeta)=
\left\{
\begin{array}{ll}
N^{1/2}\|(Dh(\zeta)|_{\zeta^{\perp}})^{-1}\|\|h\|\|\zeta\|^{N-1}, \quad &\text{if }\, \exists (Dh(\zeta)|_{\zeta^{\perp}})^{-1},\\
+\infty, \quad &\text{otherwise}.
\end{array}
\right.
\end{equation*}
Here, $Dh(\zeta)|_{\zeta^{\perp}}$ is the restriction of the derivative $Dh(\zeta)=\left(\frac{\partial}{\partial x}h\quad \frac{\partial}{\partial y}h\right)_{(x,y)=\zeta}$ to the orthogonal complement of  $\zeta$ in $\mathbb{C}^2$, and $\|h\|$ is the Bombieri-Weyl norm (also known as Kostlan or Bombieri or Weyl norm) of $h$, defined as
\begin{equation*}
\|h\|= \left(\displaystyle\sum_{i=0}^N {N \choose i}^{-1} |a_i|^2\right)^{1/2}.
\end{equation*}
If $\zeta$ is a double root of $h$, then by definition $\mu_{\text{norm}}(h,\zeta)=\infty$.
On the other hand, if there is not mention to a concrete root of $h$, then we define
\begin{equation*}
\mu_{\text{norm}}(h)=\max_{\zeta\in \mathbb{P}(\mathbb{C}^2):h(\zeta)=0}\mu_{\text{norm}}(h,\zeta).
\end{equation*}
Let
\begin{equation*}
f(z)=\displaystyle\sum_{i=0}^N a_iz^i, \quad a_N\neq 0,
\end{equation*}
be an univariate polynomial of degree $N$ with complex coefficients and $z\in\mathbb{C}$ a zero of
$f$. Consider the homogeneous counterpart of $f$,
\begin{equation*}
h(x,y)=\displaystyle\sum_{i=0}^N a_ix^iy^{N-i},
\end{equation*}
and define
\begin{equation*}
\mu_{\text{norm}}(f,z)=\mu_{\text{norm}}(h,(z,1)), \quad \mu_{\text{norm}}(f)=\max_{z\in\mathbb{C}:f(z)=0}\mu_{\text{norm}}(f,z).
\end{equation*}
Taking $\|f\|=\|h\|$ and expanding the derivative, it turns out that
\begin{equation}\label{eq:mu}
\mu_{\text{norm}}(f,z)=\frac{N^{1/2}(1+|z|^2)^{\frac{N-2}{2}}}{|f'(z)|}\|f\|,
\end{equation}
which allows us to easily compute the condition number for simple cases (see \cite{Beltran15} for an elementary proof of this last formula).


\subsection{An alternative formula for the condition number and idea of our proof}
Since a polynomial is (up to a multiplicative constant) defined by its zeros and these can be seen as spherical points, one can aim to give a formula for the condition number of a polynomial that depends uniquely on the associated spherical points. Shub and Smale accomplished this task. Adapting the notation of \cite{SS93III} to ours, we have:
\begin{prop}
Let $P(z)=\prod_{i=1}^N(z-z_i)$ be a polynomial and denote by $ p_i$ the point in $\mathbb{S}$
obtained from the inverse stereographic projection of each $z_i$. Then the condition number of $P$ equals
\begin{equation}\label{exp_cond_S2}
\mu_{\text{norm}}(P)=\frac{1}{2}\sqrt{N(N+1)}\max_{1 \leqslant i \leqslant N}\frac{\left(\int_{\mathbb{S}} \prod_{j=1}^N |p- p_j|^2 d\sigma(p)\right)^{1/2}}{\prod_{j\neq i}| p_i- p_j|}.
\end{equation}
\end{prop}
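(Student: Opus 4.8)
The plan is to start from the elementary formula \eqref{eq:mu} and rewrite each of its three ingredients---the norm $\|P\|$, the derivative $|P'(z_i)|$, and the factor $(1+|z_i|^2)^{(N-2)/2}$---in terms of the spherical points $p_j$ obtained by inverse stereographic projection. Recall the standard stereographic dictionary: if $z,w\in\mathbb C$ correspond to $p,q\in\mathbb S$, then
\begin{equation*}
|p-q|^2=\frac{4|z-w|^2}{(1+|z|^2)(1+|w|^2)},\qquad |p-q|^2 \text{ replaced by } \frac{4}{1+|z|^2}\ \text{when } q \text{ is the north pole.}
\end{equation*}
This immediately gives $\prod_{j\neq i}|p_i-p_j|^2 = \dfrac{2^{2(N-1)}\prod_{j\neq i}|z_i-z_j|^2}{(1+|z_i|^2)^{N-1}\prod_{j\neq i}(1+|z_j|^2)}$, and since $P'(z_i)=\prod_{j\neq i}(z_i-z_j)$ for $P(z)=\prod_j(z-z_j)$, we can solve for $|P'(z_i)|$ in terms of $\prod_{j\neq i}|p_i-p_j|$ and the quantities $1+|z_k|^2$.

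Second I would handle the Bombieri norm. The key classical fact (Kostlan's formula, or a direct computation) is that for any homogeneous $h$ of degree $N$,
\begin{equation*}
\|h\|^2=(N+1)\int_{\mathbb S}|h(\nu_p)|^2\,d\sigma(p),
\end{equation*}
where $\nu_p\in\mathbb C^2$ is a unit representative of the point of $\mathbb P(\mathbb C^2)$ corresponding to $p\in\mathbb S$; equivalently, writing everything back in the affine chart and using that $|h(\nu_p)|=(1+|z|^2)^{-N/2}\prod_{j=1}^N|z-z_j|$ together with the stereographic identity above, one gets
\begin{equation*}
\|P\|^2=(N+1)\,2^{-2N}\Bigl(\prod_{k=1}^N(1+|z_k|^2)\Bigr)^{-1}\int_{\mathbb S}\prod_{j=1}^N|p-p_j|^2\,d\sigma(p).
\end{equation*}
I would verify this either by quoting the Kostlan identity from \cite{SS93I} or by expanding $P$ in the monomial basis and integrating $|h(\nu_p)|^2$ over the sphere monomial-by-monomial using the known moments of $d\sigma$, which reproduce exactly the binomial weights ${N\choose i}^{-1}$ in $\|\cdot\|$.

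Finally I would assemble the pieces. Substituting the two displayed expressions into \eqref{eq:mu} at the root $z_i$, the stray products $\prod_k(1+|z_k|^2)$ and the powers of $2$ and of $(1+|z_i|^2)$ all cancel in a bookkeeping step: the $(1+|z_i|^2)^{(N-2)/2}$ from \eqref{eq:mu}, the $(1+|z_i|^2)^{-(N-1)/2}$ coming from $1/|P'(z_i)|$ rewritten via $\prod_{j\neq i}|p_i-p_j|$, and the $(1+|z_i|^2)^{-1/2}$ hidden in the isolated $k=i$ factor of $\prod_k(1+|z_k|^2)^{-1/2}$ inside $\|P\|$ combine to exponent $(N-2)/2-(N-1)/2-1/2 = -1/2+\,$(the remaining $1+|z_i|^2$ inside the integral's normalization)$\,=0$; carrying the constants, $N^{1/2}\cdot(N+1)^{1/2}\cdot 2^{-N}\cdot 2^{N-1}=\tfrac12\sqrt{N(N+1)}$, which is precisely the prefactor in \eqref{exp_cond_S2}. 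Taking the maximum over $i$ then yields the claimed identity. The main obstacle is the careful cancellation of the $(1+|z_k|^2)$ factors---it is easy to misplace a power of two or a factor $1+|z_i|^2$---so I would organize that step as a single aligned display tracking each exponent, and I would treat the case where some $z_i$ coincides with (or the point set omits) the north pole by the usual limiting argument, noting that both sides of \eqref{exp_cond_S2} are invariant under rotations of $\mathbb S$ (equivalently, under Möbius transformations of $\mathbb C$ of the isometric type), so one may always rotate the configuration to avoid the pole.
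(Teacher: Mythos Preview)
The paper does not actually prove this proposition; it simply attributes the formula to Shub and Smale \cite{SS93III} (``Adapting the notation of \cite{SS93III} to ours, we have:'') and uses it as a black box. So there is no in-paper proof to compare against, and your strategy---start from \eqref{eq:mu}, convert $|P'(z_i)|$ and $\|P\|$ to spherical quantities via the chordal-metric identity and Kostlan's integral formula, then cancel---is exactly the standard route and would be accepted.

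That said, your bookkeeping is off in two places. First, the sign in the norm identity: since $|z-w|^2=\tfrac14|p-q|^2(1+|z|^2)(1+|w|^2)$, one gets
\[
\|P\|^2=(N+1)\,2^{-2N}\Bigl(\prod_{k=1}^N(1+|z_k|^2)\Bigr)\int_{\mathbb S}\prod_{j=1}^N|p-p_j|^2\,d\sigma(p),
\]
with the product in the \emph{numerator}, not the denominator. Second, your exponent count is garbled: with the corrected sign, the $k=i$ factor in $\|P\|$ contributes $(1+|z_i|^2)^{+1/2}$, and the three exponents are $(N-2)/2-(N-1)/2+1/2=0$ cleanly---no mysterious ``remaining $1+|z_i|^2$ inside the integral's normalization'' is needed (and indeed no such term exists). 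Your own computation $(N-2)/2-(N-1)/2-1/2$ equals $-1$, not $-1/2$, so the hand-wave there was papering over a genuine discrepancy caused by the earlier sign slip. Fix those two points and the argument goes through verbatim; the constant $\tfrac12\sqrt{N(N+1)}$ and the remaining cancellation of $\prod_{j\neq i}(1+|z_j|^2)^{1/2}$ are correct as you wrote them.
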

As in \cite{BEMO19}, we will start from a geometrical construction of a point set $\mathcal P_N$ (see Figure \ref{fig:construction} for a graphical description). Its main features are:
\begin{enumerate}
\item The $N$ spherical points are distributed in $2M-1$ parallels of varying height in the sphere, with the $M$--th parallel being the equator.
\item The parallel at height $h_j$ contains $r_j$ points which are (up to a homotety and a traslation) a set of $r_j$ roots of the unity. They may have a phase or not, this is not important for our proof.
\item The values of $h_j$ are chosen in such a way that there is a {\em band} of relative area $r_j/N$ whose central height is $h_j$.
\item The construction is equatorially symmetric: $h_j=-h_{2M-j}$ and $r_j=r_{2M-j}$.
\end{enumerate}
 Once we have defined our set of points, in order to prove our main result we proceed as follows:
 \begin{enumerate}
\item[(A)] Given any $q\in\mathbb S$ and any band $B$ in the sphere, we consider the central parallel $Q$ of $B$, and we compare the integral $I_B$ of $\log|p-q|$ when $p$ lies in the band with the expected value $\Tilde{I}_{Q}$ of the same function when $p$ lies in $Q$. We conclude that $I_B\approx \nu(B) \Tilde{I}_Q$ where $\nu(B)$ is the normalized area of $B$.
\item[(B)] Given any $q\in\mathbb S$, we divide the integral $I=-\kappa$ of $\log|p-q|$ with $p\in\mathbb S$ in the different bands associated to our point set. From (A), the value $I_j$ in each band $B_j$ is similar to $\nu(B_j)\Tilde{I}_j $ where $\Tilde{I}_j $ is the expected value of the function along the parallel $Q_j$, that is $I_j\approx (r_j/N)\Tilde{I}_j $ and $-\kappa N=I N=N\sum_j I_j\approx \sum_j r_j\Tilde I_j$. The difference between $-\kappa N$ and $\sum_j r_j\Tilde{I}_j$ turns out to be rather small except if $q$ is too close to the poles. 
\item[(C)] We then compare the value of $r_j \Tilde{I}_j $ with that of $\sum_{k=0}^{r_j-1}\log|q-p_{j,k}|$ where the $p_{j,k}$ are the $r_j$ points in the corresponding parallel. Both quantities are again very similar (except for the parallel which is the one closest to $q$, where the discrete sum can diverge to $-\infty$). From this, we get
    \[
    \prod_{i=1}^n|q-p_i|=e^{\sum_{j=1}^{2M-1}\sum_{k=0}^{r_j-1}\log|q-p_{j,k}|}\lesssim e^{\sum_{j=1}^{2M-1}r_j \Tilde{I}_j}\approx e^{-\kappa N}.
    \]
     This essentially gives an upper bound for the numerator in \eqref{exp_cond_S2}, once the details are settled.
\item[(D)] The same kind of argument, using also that our point set is well--separated, produces a lower bound $\prod_{j\neq i}|p_i-p_j|\gtrsim \sqrt{N}e^{-N\kappa}$, valid for all fixed $i$, for the denominator of \eqref{exp_cond_S2}. This almost finishes the proof of our main result.
 \end{enumerate}
 This procedure is similar to that of \cite{BEMO19}, but in this paper all the appearances of $\approx, \lesssim,\gtrsim$ are estimated with concrete constants. One benefit that we get is that our point set is more simple, since in \cite{BEMO19} the points need to be distributed in the parallels of height $h_j$ but also a part of them is sent to the parallels delimiting the spherical bands, while we only need to allocate points in the central parallels.
 \begin{remark}
  The construction in \cite{BEMO19} has a property that ours does not: the associated discrete measure can be used to approximate the continuous integral of $\log|p-q|$ up to a constant order for any fixed $q$. This property (which is the reason to send part of the points to the parallels delimiting the bands) is a key point in the proof of the main result of \cite{BEMO19}. Our construction only gets this if $q$ is not too close to the north and south poles, yet we are able to prove our main theorem from this weaker property.
 \end{remark}


\subsection*{Organization of the paper}
In the next section, we state the construction of the set of spherical points which will be, under the stereographic projection, the zeros of the sequence of well--conditioned polynomials. In Section \ref{sec:comparison} we prove (A); in Section \ref{sec:Proof1} we prove (B); sections \ref{sec:numerador} and \ref{sec:denominador} are devoted to proving (C) and (D) respectively. Finally, the main result is proved in Section \ref{sec:final}.


\section{Geometrical description of the set of points $\mathcal{P}_{N}$ in $\mathbb{S}$}\label{Sect:PN}

We now construct our set of points $\mathcal{P}_N=\{p_1,\ldots,p_N\}$ in $\mathbb{S}$. We denote by $Q_h$ the parallel of height $h$,
\begin{equation*}
Q_h=\{(x,y,z)\in\mathbb{S} : z=h\}, \quad -1\leqslant h \leqslant 1.
\end{equation*}

Let $M$ be a positive integer, define $N=4M^2$, and let
\begin{equation*}
r_j=\begin{cases}4j, &1\leqslant j\leqslant M,\\4(2M-j), &M\leqslant j\leqslant 2M-1,\end{cases}
\quad
h_j=\begin{cases}1-\frac{j^2}{M^2}, &1\leqslant j\leqslant M, \\-1+\frac{(2M-j)^2}{M^2}, &M\leqslant j\leqslant 2M-1.\end{cases}
\end{equation*}
Note that $N=r_1+\cdots+r_{2M-1}$ and the claimed symmetric properties $r_j=r_{2M-j}$, $h_j=-h_{2M-j}$, and note also that $h_M=0$. Our point set is constructed by taking $r_j$ equally spaced points in each of the parallels $Q_j=Q_{h_j}$. We will refer to $Q_j$ just as the $j$--th parallel.
For all $1 \leqslant j \leqslant 2M-1$, we define the $j$--th band as
\begin{equation*}
B_j=\{(x,y,z)\in\mathbb{S}, \,H_j \leqslant z \leqslant H_{j-1}\},
\end{equation*}
being
\begin{equation}\label{frontera_banda}
H_j=\begin{cases}1-\frac{j(j+1)}{M^2}, & 0 \leqslant j \leqslant M-1,\\-1+\frac{(2M-j-1)(2M-j)}{M^2}, &M\leqslant j\leqslant 2M-1.\end{cases}
\end{equation}
Observe that $Q_j$ is the central parallel of the band $B_j$, in the sense that
\begin{equation*}
h_j=\frac{H_{j-1}+H_j}{2}=H_{j-1}-\frac{r_j}{N}=H_j+\frac{r_j}{N},\quad 1\leqslant j \leqslant 2M-1.
\end{equation*}
Moreover, note that $B_1$ and $B_{2M-1}$ are just two spherical caps surrounding the north and the south pole respectively and that  $\mathbb{S}= \displaystyle\cup_{j=1}^{2M-1}B_{j}$. The relative area of each band is (use for example Lemma \ref{int_S2}):
\begin{equation*}
\nu(B_j)=\frac{H_{j-1}-H_j}{2}=\frac{r_j}{N}, \quad 1\leqslant j \leqslant 2M-1.
\end{equation*}
See Figure \ref{fig:construction} for a graphical illustration of this construction.
\begin{figure}
  \centering
  \includegraphics[width=\linewidth]{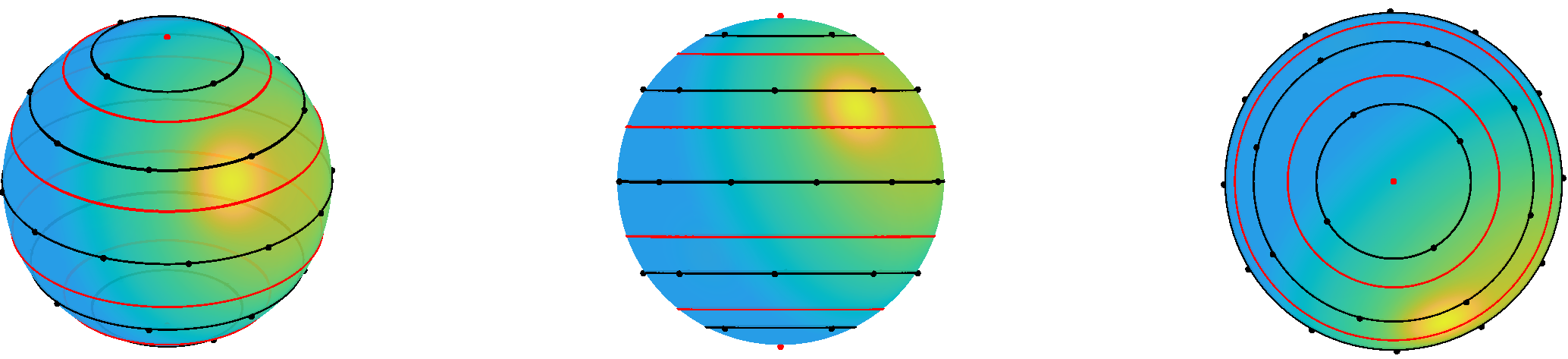}
  \caption{Our construction of spherical points for $M=3$, that is $N=36$ points, from three different points of view (left: tilted; center: equatorial; right: north pole). The parallels $Q_j$ are the black circles, and the points are equidistributed among them. The red lines are the parallels $Q_{H_j}$ which delimit the bands (the north and south poles are marked with red dots and correspond to $H_0$ and $H_{2M-1}$, but they do not belong in $\mathcal P_N$.)}\label{fig:construction}
\end{figure}


\section{Comparison of the integrals in parallels and bands}\label{sec:comparison}
This section provides a comparison tool which is independent of the construction above. It will later be applied to each of the bands $B_j$ described in Section \ref{Sect:PN}.
Let $B$ be the band contained between parallels of height $h-\epsilon$ and $h+\epsilon$, with $Q=Q_h$ the central parallel. The relative area of $B$ is $\epsilon$.
We now show that, for any {\em fixed} $q=(a,b,c)\in\mathbb S$, the integral $I_B(q)$ of $\log|p-q|$ with $p$ chosen in $B$ is approximately equal to the relative area of the band, times the expected value $\Tilde I_Q(q)$ of the same function in the central parallel. From \cite[Prop. 2.2]{Diamond}, for the parallel of height $t\in[-1,1]$ the expected value $\Tilde I_{Q_t}(q)$ satisfies

\begin{equation}\label{Valor_fp(h)}
\Tilde I_{Q_t}(q)=
\left\{\begin{array}{ll}
\frac{1}{2}(\log(1+t)+\log(1-c)), \quad &\mathrm{if }\,\, t \geqslant c,\\
&\\
\frac{1}{2}(\log(1-t)+\log(1+c)), \quad &\mathrm{if }\,\, t <c.
\end{array}\right.
\end{equation}
From Lemma \ref{int_S2} we have
\begin{equation}\label{rel1}
\frac{1}{\varepsilon}I_B(q)=\frac{1}{\varepsilon}\int_{p\in B}\log|q-p|d\sigma(p)=\frac{1}{2\varepsilon}\int_{h-\varepsilon}^{h+\varepsilon}\Tilde I_{Q_t}(q)dt.
\end{equation}
\begin{lemma}[Comparison when $q$ is outside of the band]\label{lem:nuevocotaptomedio}
With the notations above, if $c\geqslant h+\varepsilon$ we have
\begin{align}\label{eq:nueva}
 0\leqslant \Tilde I_{Q}(q)-\frac{1}{\varepsilon}I_B(q)
-\frac{\varepsilon^2}{12(1-h)^2}&=\sum_{n=2}^{\infty}
\frac{\varepsilon^{2n}}{4n(2n+1)(1-h)^{2n}}\\
\nonumber & \leqslant \frac{1}{2}\left(\frac{5}{6}-\log2\right)\frac{\varepsilon^{4}}{(1-h)^{4}},
\end{align}
and if $c\leqslant h-\varepsilon$ we have
\begin{align}\label{eq:nueva2}
0\leqslant \Tilde I_{Q}(q)-\frac{1}{\varepsilon}I_B(q)
-\frac{\varepsilon^2}{12(1+h)^2}&=\sum_{n=2}^{\infty}
\frac{\varepsilon^{2n}}{4n(2n+1)(1+h)^{2n}}\\
\nonumber & \leqslant \frac{1}{2}\left(\frac{5}{6}-\log2\right)\frac{\varepsilon^{4}}{(1+h)^{4}}.
\end{align}
\end{lemma}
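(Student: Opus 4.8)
The plan is to reduce everything to an explicit Taylor expansion of a single elementary antiderivative, and then estimate the tail of the resulting power series. First I would use \eqref{rel1} to write
\[
\Tilde I_Q(q)-\frac1\varepsilon I_B(q)=\frac{1}{2\varepsilon}\int_{h-\varepsilon}^{h+\varepsilon}\bigl(\Tilde I_{Q_h}(q)-\Tilde I_{Q_t}(q)\bigr)\,dt,
\]
and then invoke \eqref{Valor_fp(h)}. In the case $c\geqslant h+\varepsilon$ the whole band lies below the parallel through $q$, so $t<c$ for every $t\in[h-\varepsilon,h+\varepsilon]$, and the second branch of \eqref{Valor_fp(h)} applies throughout; hence $\Tilde I_{Q_t}(q)=\tfrac12(\log(1-t)+\log(1+c))$ and the $\log(1+c)$ terms cancel, leaving
\[
\Tilde I_Q(q)-\frac1\varepsilon I_B(q)=\frac{1}{4\varepsilon}\int_{h-\varepsilon}^{h+\varepsilon}\bigl(\log(1-h)-\log(1-t)\bigr)\,dt.
\]
Substituting $t=h+\varepsilon s$ with $s\in[-1,1]$ and writing $1-t=(1-h)(1-\tfrac{\varepsilon s}{1-h})$, this becomes $-\tfrac14\int_{-1}^{1}\log\!\bigl(1-\tfrac{\varepsilon s}{1-h}\bigr)\,ds$. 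The symmetric case $c\leqslant h-\varepsilon$ is identical with $1-h$ replaced by $1+h$ (now the first branch of \eqref{Valor_fp(h)} applies throughout and the $\log(1+t)$, not the constant, is the relevant term), so I would treat only the first case and remark that the second follows verbatim.

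Next I would expand the logarithm. Since $|\varepsilon s/(1-h)|\leqslant \varepsilon/(1-h)<1$ (the band lies strictly inside the sphere, as $c\geqslant h+\varepsilon$ forces $h+\varepsilon\leqslant 1$, and in our application $h+\varepsilon=H_{j-1}<1$ for $j\geqslant 2$ — the case $j=1$ does not arise here since there $q$ cannot be strictly above the cap), the series $\log(1-x)=-\sum_{m\geqslant 1}x^m/m$ converges uniformly in $s$, so I can integrate term by term:
\[
-\frac14\int_{-1}^{1}\log\!\Bigl(1-\frac{\varepsilon s}{1-h}\Bigr)\,ds
=\frac14\sum_{m=1}^{\infty}\frac{1}{m}\frac{\varepsilon^m}{(1-h)^m}\int_{-1}^{1}s^m\,ds
=\frac12\sum_{m\ \mathrm{even}}\frac{\varepsilon^m}{m(m+1)(1-h)^m},
\]
because $\int_{-1}^1 s^m\,ds$ vanishes for odd $m$ and equals $2/(m+1)$ for even $m$. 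Writing $m=2n$ gives exactly $\sum_{n\geqslant 1}\varepsilon^{2n}/\bigl(4n(2n+1)(1-h)^{2n}\bigr)$; pulling out the $n=1$ term $\varepsilon^2/(12(1-h)^2)$ yields the claimed identity in \eqref{eq:nueva}, and positivity of every term gives the lower bound $0\leqslant \Tilde I_Q(q)-\tfrac1\varepsilon I_B(q)-\varepsilon^2/(12(1-h)^2)$.

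It remains to bound the tail $S:=\sum_{n=2}^{\infty}\varepsilon^{2n}/\bigl(4n(2n+1)(1-h)^{2n}\bigr)$ by $\tfrac12(\tfrac56-\log 2)\,\varepsilon^4/(1-h)^4$. Set $x=\varepsilon^2/(1-h)^2\in(0,1]$; then $S=\sum_{n\geqslant 2}x^n/\bigl(4n(2n+1)\bigr)$ and I want $S\leqslant \tfrac12(\tfrac56-\log2)x^2$. The cleanest route is to observe that $\tfrac12(\tfrac56-\log2)=\sum_{n\geqslant 2}1/\bigl(4n(2n+1)\bigr)$: indeed $\sum_{n\geqslant 1}1/\bigl(4n(2n+1)\bigr)$ is a telescoping-type series that evaluates via $\sum_{n\geqslant 1}\bigl(\tfrac1{2n}-\tfrac1{2n+1}\bigr)=1-\log 2$, and dividing by $2$ and removing the $n=1$ term $1/12$ leaves exactly $\tfrac12(1-\log 2)-\tfrac1{12}=\tfrac12(\tfrac56-\log 2)$. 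Since $0<x\leqslant 1$ we have $x^n\leqslant x^2$ for all $n\geqslant 2$, hence $S=\sum_{n\geqslant 2}x^n/\bigl(4n(2n+1)\bigr)\leqslant x^2\sum_{n\geqslant 2}1/\bigl(4n(2n+1)\bigr)=\tfrac12(\tfrac56-\log2)x^2$, which is the desired upper bound. The only mild subtlety — the step I expect to need the most care — is justifying the term-by-term integration and confirming the strict inequality $\varepsilon/(1-h)<1$ (equivalently $h+\varepsilon\leqslant 1$, with equality harmless since then the integrand is still integrable); both are routine given the standing hypothesis $c\geqslant h+\varepsilon$ and $c\leqslant 1$, and the verification of the closed form $\sum_{n\geqslant 1}1/(4n(2n+1))=\tfrac12(1-\log2)$ via the Leibniz series is elementary.
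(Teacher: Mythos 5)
Your proposal is correct and follows essentially the same route as the paper's proof: reduce via \eqref{rel1} and \eqref{Valor_fp(h)} to the integral of $\log\bigl(1\pm\tfrac{t}{1\mp h}\bigr)$ over a symmetric interval, expand the logarithm in a power series, integrate termwise (odd powers vanish), and bound the tail using $\varepsilon/(1\mp h)\leqslant 1$ together with the closed form $\sum_{n\geqslant 1}\tfrac{1}{4n(2n+1)}=\tfrac12(1-\log 2)$. The only differences are cosmetic: you rescale with $t=h+\varepsilon s$ and evaluate the series by the Leibniz argument, whereas the paper cites a table identity, and your extra care about the borderline case $\varepsilon=1-h$ is a harmless refinement of the paper's use of $\varepsilon/(1-h)\leqslant 1$.
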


\begin{proof}
Assume first that $c\geqslant h+\varepsilon$. Let $U$ be the right hand term in \eqref{eq:nueva}. Then,
\begin{align*}
U=&\frac12\log(1-h)-\frac{1}{4\varepsilon}\int_{-\varepsilon}^\varepsilon \log(1-h+t)\,dt-\frac{\varepsilon^2}{12(1-h)^2}\\
=&-\frac{1}{4\varepsilon}
\int_{-\varepsilon}^{\varepsilon}\log\left(1+\frac{t}{1-h}\right)\,dt
-\frac{\varepsilon^2}{12(1-h)^2}\\
=&-\sum_{n=1}^\infty\frac{1}{4\varepsilon}\int_{-\varepsilon}^\varepsilon
\frac{(-1)^{n+1}t^n}{n(1-h)^n}\,dt-\frac{\varepsilon^2}{12(1-h)^2}.
\end{align*}
The terms with odd $n$ in the sum integrate to $0$ and hence we have
\begin{align*}
U=&\sum_{n=1}^\infty
\frac{\varepsilon^{2n}}{4n(2n+1)(1-h)^{2n}}-\frac{\varepsilon^2}{12(1-h)^2}
=\sum_{n=2}^\infty
\frac{\varepsilon^{2n}}{4n(2n+1)(1-h)^{2n}},
\end{align*}
as wanted. The final inequality follows from noting that $\varepsilon/(1-h)\leqslant 1$ and computing the sum (\cite[(0.234.8)]{GR15}). The other case ($c\leqslant h-\varepsilon$) is proved the same way.
\end{proof}

\begin{lemma}[Comparison when $q$ is inside of the band]\label{comp_paral_band_p}
With the notations above, assume now that $h-\varepsilon \leqslant c \leqslant h+\varepsilon.$ Then,
\begin{equation}
-\frac{\varepsilon/4}{1-h^2}\leqslant  \Tilde I_{Q}(q)-\frac{1}{\varepsilon}I_{B}(q)\leqslant
\begin{cases}\frac{(1-\log 2)\varepsilon^2}{2(1-h)^2},&h\leqslant c\leqslant h+\varepsilon,\\
\frac{(1-\log 2)\varepsilon^2}{2(1+h)^2},&h-\varepsilon\leqslant c\leqslant h.\end{cases}
\end{equation}
\end{lemma}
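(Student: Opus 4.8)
The plan is to follow the same strategy as in Lemma~\ref{lem:nuevocotaptomedio}, but now the singularity of $\log|q-p|$ lies inside the band, so the power-series expansion of $\log(1+t/(1-h))$ is no longer valid on the whole interval $[-\varepsilon,\varepsilon]$; the integrand has an integrable logarithmic singularity at $t=c-h\in[-\varepsilon,\varepsilon]$, and we must split the integral at that point and use the two branches of \eqref{Valor_fp(h)}. Concretely, write $q=(a,b,c)$ with $h-\varepsilon\leqslant c\leqslant h+\varepsilon$. Using \eqref{rel1} and \eqref{Valor_fp(h)}, the quantity $\frac1\varepsilon I_B(q)$ becomes
\[
\frac{1}{2\varepsilon}\int_{h-\varepsilon}^{c}\tfrac12\bigl(\log(1-t)+\log(1+c)\bigr)\,dt
+\frac{1}{2\varepsilon}\int_{c}^{h+\varepsilon}\tfrac12\bigl(\log(1+t)+\log(1-c)\bigr)\,dt ,
\]
and $\Tilde I_Q(q)$ is either $\tfrac12(\log(1+h)+\log(1-c))$ or $\tfrac12(\log(1-h)+\log(1+c))$ according to whether $h\geqslant c$ or $h<c$. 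So the whole difference $D:=\Tilde I_Q(q)-\frac1\varepsilon I_B(q)$ reduces to explicitly computable one-variable integrals of $\log(1\pm t)$, which produce expressions involving $(1-h\pm\varepsilon)\log(1-h\pm\varepsilon)$, $(1-c)\log(1-c)$, and their $(1+\cdot)$ counterparts.

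The next step is to reduce to a single extremal variable. By the equatorial symmetry of the statement (the two cases $h\leqslant c\leqslant h+\varepsilon$ and $h-\varepsilon\leqslant c\leqslant h$ are exchanged by $h\mapsto-h$, $c\mapsto-c$, $t\mapsto-t$), it suffices to treat, say, $h\leqslant c\leqslant h+\varepsilon$. Introduce the normalized variable $s=(c-h)/\varepsilon\in[0,1]$, so that $D=D(h,\varepsilon,s)$. I would then show that $s\mapsto D(h,\varepsilon,s)$ is concave (or at least that its derivative has a single sign change), so that the maximum of $D$ over $s\in[0,1]$ is attained at an interior critical point and the minimum at an endpoint $s\in\{0,1\}$. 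Differentiating $D$ in $s$ and simplifying should give something like $\partial_s D=\tfrac14\log\bigl(\tfrac{(1-h)^2-(c-h)^2}{(1-c)^2}\bigr)$ up to lower-order terms, whose sign is controlled because $c$ lies in $[h,h+\varepsilon]$ and $\varepsilon$ is small relative to $1-h$.

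For the upper bound, evaluate $D$ at the interior maximizer: after the simplifications this is where the singularity sits symmetrically and the second-order Taylor expansion of $x\log x$ kicks in, giving the leading term $\tfrac{\varepsilon^2}{4(1-h)}\cdot$(bounded factor); pushing the expansion to control the remainder by $\tfrac{(1-\log2)\varepsilon^2}{2(1-h)^2}$ is the same type of estimate as the final inequality in Lemma~\ref{lem:nuevocotaptomedio}, using $\varepsilon\leqslant 1-h$ and comparing the tail of the Taylor series of $\log(1+x)$ against a geometric series as in \cite[(0.234.8)]{GR15}. For the lower bound, evaluate $D$ at the endpoints $s=0$ (i.e.\ $c=h$) and $s=1$ (i.e.\ $c=h+\varepsilon$): at $c=h$ the expression is symmetric and one gets $D=\sum_{n\geqslant2}\varepsilon^{2n}/\bigl(4n(2n+1)(1-h)^{2n}\bigr)\geqslant0$, which is already covered (it is the boundary case of Lemma~\ref{lem:nuevocotaptomedio}); at $c=h+\varepsilon$ the singularity sits at the edge of the band and the integral $\int_{c}^{h+\varepsilon}$ degenerates, leaving $D=\tfrac18\bigl((1-h+\varepsilon)\log(1-h+\varepsilon)-(1-h-\varepsilon)\log(1-h-\varepsilon)\bigr)/\varepsilon$-type terms minus $\Tilde I_Q$, and a direct estimate of $x\log x$ convexity gives the clean bound $D\geqslant -\tfrac{\varepsilon/4}{1-h^2}$; note this negative lower bound genuinely needs both $1-h$ and $1+h$, which is why the denominator is $1-h^2$ rather than $(1-h)^2$. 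The main obstacle I expect is purely bookkeeping: keeping the several $x\log x$ terms organized through the split-at-$c$ integration and identifying the correct monotonicity in $s$ so that the one-sided bounds come out with exactly the constants $1-\log2$ and $1/4$; there is no conceptual difficulty beyond what Lemma~\ref{lem:nuevocotaptomedio} already handles, but the algebra is heavier because the singularity is now in the interior.
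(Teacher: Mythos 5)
Your setup (split the band integral at $t=c$ using the two branches of \eqref{Valor_fp(h)}, reduce to $h\leqslant c\leqslant h+\varepsilon$ by symmetry, and study the difference as a one-variable function of $c$) is exactly the paper's skeleton, but the heart of your plan — the location of the extremes and what happens there — is wrong, and executed literally it would not give the stated bounds. Writing $U(c)=\Tilde{I}_{Q}(q)-\frac{1}{\varepsilon}I_{B}(q)$ for $c\in[h,h+\varepsilon]$, the logarithmic terms in $U'(c)$ cancel and one gets the rational expression $U'(c)=\frac{h+\varepsilon-c}{2\varepsilon(1-c^2)}\geqslant 0$; there is no interior critical point and no concavity argument to make: $U$ is monotone increasing on $[h,h+\varepsilon]$, so its maximum is at the endpoint $c=h+\varepsilon$ and its minimum at the endpoint $c=h$. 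Your proposed derivative (a logarithm of a ratio, ``up to lower-order terms'') and the conclusion ``maximum at an interior critical point, minimum at an endpoint'' do not hold.

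Moreover, your evaluations at the two endpoints are swapped. At $c=h$ the point $q$ sits at the \emph{center} of the band, not at its edge, and this is not a boundary case of Lemma \ref{lem:nuevocotaptomedio}: there $U(h)=\frac14\sum_{n\geqslant 1}\frac{(-1)^n\varepsilon^n}{n(n+1)}\bigl(\frac{1}{(1-h)^n}+\frac{1}{(1+h)^n}\bigr)$, which is \emph{negative} (not the nonnegative even series you claim), and an alternating-series estimate gives precisely the lower bound $-\frac{\varepsilon/4}{1-h^2}$ — this is where both $1-h$ and $1+h$ enter, not at $c=h+\varepsilon$. Conversely, $c=h+\varepsilon$ \emph{is} the boundary case of Lemma \ref{lem:nuevocotaptomedio}: there $U(h+\varepsilon)=\sum_{n\geqslant 1}\frac{\varepsilon^{2n}}{4n(2n+1)(1-h)^{2n}}\geqslant 0$, involving only $1-h$, and summing via \cite[(0.234.8)]{GR15} yields the upper bound $\frac{(1-\log 2)\varepsilon^2}{2(1-h)^2}$. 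So the upper bound comes from the band edge and the lower bound from the band center — the reverse of your assignment — and as written your plan would assert false intermediate statements (e.g.\ $U\geqslant 0$ at $c=h$), so the gap is genuine even though the overall strategy is the right one.
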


\begin{proof}
Assume first that $c\in[h,h+\varepsilon]$, and define
\begin{align*}
U(c)=&\Tilde I_{Q}(q)-\frac{1}{\varepsilon}
I_{B}(q)\\
=&\frac{1}{2}\log(1-h)+\frac12\log(1+c)-\frac{1}{4\varepsilon}\int_{h-\varepsilon}^c\log(1-t)+\log(1+c)\,dt
\\&-\frac{1}{4\varepsilon}\int_c^{h+\varepsilon}\log(1+t)+\log(1-c)\,dt.
\end{align*}
With some little arithmetic it is easy to see that
\[
U'(c)=\frac{h+\varepsilon-c}{\varepsilon(2-2c^2)}\geqslant 0.
\]
The maximum and the minimum of $U$ in the interval $c\in[h,h+\varepsilon]$ are thus in the extremes. The case $c=h+\varepsilon$ is covered by \eqref{eq:nueva}, which (using $\varepsilon/(1-h)\leqslant 1$) yields:
\begin{equation*}
U(c)\leqslant U(h+\varepsilon)=\sum_{n=1}^\infty
\frac{\varepsilon^{2n}}{4n(2n+1)(1-h)^{2n}}\leqslant \frac{\varepsilon^2}{(1-h)^2}\frac14\sum_{n=1}^\infty
\frac{1}{n(2n+1)},
\end{equation*}
and again from \cite[(0.234.8)]{GR15} we obtain the result.
For the minimum, we have
\begin{align*}
U(c)\geqslant U(h)=&\frac14(\log(1-h)+\log(1+h))-\frac{1}{4\varepsilon}\left(\int_{h-\varepsilon}^h\log(1-t)\,dt+\int_h^{h+\varepsilon}\log(1+t)\,dt\right)\\
=&-\frac{1}{4\varepsilon}\left(\int_{h-\varepsilon}^h\log\left(1+\frac{h-t}{1-h}\right)\,dt+\int_h^{h+\varepsilon}\log\left(1+\frac{t-h}{1+h}\right)\,dt\right)
\\
=&-\frac{1}{4\varepsilon}\left(\int_0^{\varepsilon}\log\left(1+\frac{t}{1-h}\right)\,dt+\int_0^{\varepsilon}\log\left(1+\frac{t}{1+h}\right)\,dt\right).
\end{align*}
Expanding the logarithms in power series and integrating termwise we get
\begin{align*}
U(c)\geqslant U(h)=&\frac{1}{4}\sum_{n= 1}^\infty\frac{(-1)^n\varepsilon^n}{n(n+1)}\frac{1}{(1-h)^n}+\frac{1}{4}\sum_{n= 1}^\infty\frac{(-1)^n\varepsilon^n}{n(n+1)}\frac{1}{(1+h)^n},
\end{align*}
and since $\varepsilon/(1-h)$ and $\varepsilon/(1+h)$ are both at most equal to $1$ we see that both alternating series have decreasing terms, thus concluding:
\[
U(c)\geqslant U(h)\geqslant-\frac{\epsilon}{8(1-h)}-\frac{\epsilon}{8(1+h)}=-\frac{\epsilon/4}{1-h^2},
\]
and the lemma follows. The other case ($c\in[h-\varepsilon,h]$) is done the same way.
\end{proof}


\section{Comparison between $-\kappa N$ and $\sum_j r_j\Tilde{I}_j $}\label{sec:Proof1}
 Recall that:
\begin{itemize}
  \item $I_j(q)$ is the integral of $\log|p-q|$ when $p$ lies in the $j$--th band $B_j$, so that
\begin{equation}\label{eq:kap}
-\kappa=\int_{\mathbb S}\log|p-q|\,d\sigma(p)=\sum_{j=1}^{2M-1}I_j(q),\quad q\in\mathbb S.
\end{equation}
  \item $\Tilde{I}_j (q)$ is the expected value of $\log|p-q|$ when $p$ lies in the $j$--th parallel $Q_j$.  From \eqref{Valor_fp(h)} we have
      \begin{equation}\label{Valor_Itildej}
\Tilde{I}_j (q)=
\left\{\begin{array}{ll}
\frac{1}{2}(\log(1+h_j)+\log(1-c)), \quad &\mathrm{if }\,\, h_j \geqslant c,\\
&\\
\frac{1}{2}(\log(1-h_j)+\log(1+c)), \quad &\mathrm{if }\,\, h_j <c.
\end{array}\right.
\end{equation}
\end{itemize}
The results in Section \ref{sec:comparison} yield $N I_j(q)\approx r_j\Tilde{I}_j(q)$ where the meaning of $\approx$ is precise and has different bounds for $q$ outside and inside of the band $B_j$.

\begin{lemma}\label{const_cond}
Let  $q=(a,b,c)\in B_\ell\subseteq \mathbb{S}$ with $\ell\leqslant M$ and $M\geqslant 5$. Let
\begin{equation}\label{Sum_SN}
S_N=S_N(q)=\displaystyle\sum_{j=1}^{2M-1} r_j\Tilde{I}_j (q).
\end{equation}
Then,
\begin{equation*}
-1\leqslant S_N +  N\kappa-T(\ell)\leqslant \frac{2(1-\log 2)}{\ell}+\frac{1}{15},
\end{equation*}
where
\begin{equation}\label{eq:T}
T(\ell)=\sum_{j=1}^{\ell-1}\frac{r_j^3}{12N^2(1+h_j)^2}+\sum_{j=\ell+1}^{2M-1}\frac{r_j^3}{12N^2(1-h_j)^2}.
\end{equation}
\end{lemma}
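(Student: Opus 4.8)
The plan is to split the sum $S_N+N\kappa = \sum_{j}(r_j\Tilde I_j - N I_j)$ into contributions from the band $B_\ell$ containing $q$ and the remaining bands, apply the two comparison lemmas from Section \ref{sec:comparison} band by band, and then estimate the resulting series. Concretely, for each band $B_j$ with central parallel $Q_j$ at height $h_j$ and relative area $\varepsilon_j = r_j/N$, Lemma \ref{lem:nuevocotaptomedio} (when $j\neq\ell$, so $q$ is outside $B_j$) gives
\[
0\leqslant \Tilde I_j(q) - \frac{N}{r_j}I_j(q) - \frac{r_j^2}{12N^2(1\mp h_j)^2}\leqslant \frac12\Bigl(\tfrac56-\log 2\Bigr)\frac{r_j^4}{N^4(1\mp h_j)^4},
\]
with the sign chosen according to whether $h_j>c$ or $h_j<c$; multiplying by $r_j$ and summing over $j\neq\ell$ produces exactly the main term $T(\ell)$ plus a quartic error. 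For the single band $B_\ell$ containing $q$, Lemma \ref{comp_paral_band_p} gives the two-sided bound $-\frac{\varepsilon_\ell/4}{1-h_\ell^2}\leqslant \Tilde I_\ell - \frac{N}{r_\ell}I_\ell \leqslant \frac{(1-\log 2)\varepsilon_\ell^2}{2(1\mp h_\ell)^2}$; multiplying by $r_\ell=\varepsilon_\ell N$ this contributes an $O(1)$ term.

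The next step is to bound the accumulated errors using the explicit geometry of $\mathcal P_N$. The key arithmetic facts are $r_j=4j$ and $1-h_j = j^2/M^2$ for $j\leqslant M$ (and the symmetric versions for $j\geqslant M$), so that $\dfrac{r_j}{N(1-h_j)} = \dfrac{4j}{4M^2\cdot j^2/M^2} = \dfrac{1}{j}$, and similarly $\dfrac{r_j}{N(1+h_j)}$ is controlled for $j\geqslant M$. Thus the quartic remainder from the outside bands is bounded by a constant times $\sum_{j} \frac{1}{j^4}\cdot\frac{r_j}{N}\cdot\frac{1}{?}$ — more precisely $\sum_{j\ne\ell} r_j \cdot \frac12(\frac56-\log 2)\frac{r_j^4}{N^4(1-h_j)^4}$ which, after substituting, becomes a convergent sum like $\frac{4}{M}\cdot\frac12(\frac56-\log 2)\sum_j \frac{1}{j^3}$ or similar, hence $O(1/M)$ and ultimately absorbed into the stated $\frac{1}{15}$ once $M\geqslant 5$. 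The $B_\ell$ contribution: the lower bound $-r_\ell\cdot\frac{\varepsilon_\ell/4}{1-h_\ell^2} = -\frac{r_\ell^2/4}{N(1-h_\ell^2)}$ needs to be shown to be $\geqslant -1$, which again follows from $r_\ell/(N(1-h_\ell)) = 1/\ell \leqslant 1$ and $1+h_\ell\geqslant 1$; the upper bound $r_\ell\cdot\frac{(1-\log 2)\varepsilon_\ell^2}{2(1-h_\ell)^2} = \frac{(1-\log 2)}{2}\cdot\frac{r_\ell^2}{N(1-h_\ell)^2}\cdot\frac{1}{?}$ gives the $\frac{2(1-\log 2)}{\ell}$ term after simplification (the factor $2$ presumably accounting for both the $h_\ell\leqslant c$ and $h_\ell\leqslant c$ cases or a crude bound combining $B_\ell$ with the boundary-adjacent estimate).

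I would organize the write-up as: (i) decompose $S_N + N\kappa = \sum_{j\ne\ell}\bigl(r_j\Tilde I_j - N I_j\bigr) + \bigl(r_\ell\Tilde I_\ell - N I_\ell\bigr)$ using \eqref{eq:kap} and \eqref{Sum_SN}; (ii) for $j<\ell$ note $h_j>c$ (since $q\in B_\ell$ and heights decrease with $j$) so the first branch of Lemma \ref{lem:nuevocotaptomedio} applies with $1+h_j$ replaced appropriately — wait, one must be careful: for $j<\ell$ we have $h_j\geqslant H_{\ell-1}\geqslant c$, giving the $(1-h_j)$-type bound, while for $j>\ell$ we have $h_j<c$ giving the $(1+h_j)$-type bound, matching the two sums in \eqref{eq:T}; (iii) sum the leading terms to get $T(\ell)$ exactly; (iv) bound the quartic tails by comparison with $\sum 1/j^3$ using the substitution above; (v) handle $B_\ell$ via Lemma \ref{comp_paral_band_p}; (vi) collect constants, using $M\geqslant 5$ to make the numerical bounds $-1$ and $\frac{2(1-\log2)}{\ell}+\frac{1}{15}$ come out. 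The main obstacle I anticipate is step (iv)–(vi): getting the tail sums and the $B_\ell$-term to fit inside the clean constants $-1$ and $\frac{1}{15}$ requires tracking the exact numerical value of $\frac12(\frac56-\log 2)\approx 0.07$, carefully bounding partial sums $\sum_{j\geqslant 1}j^{-3} = \zeta(3)\approx 1.202$ and handling the possibility that $\ell$ or $j$ is near $M$ where the $(1+h_j)$ denominators in the second sum are smallest; also one must double-check the boundary cases $j=\ell\pm 1$ where $q$ is outside $B_j$ but possibly very close to its edge, so the hypothesis $c\geqslant h_j+\varepsilon_j$ or $c\leqslant h_j-\varepsilon_j$ of Lemma \ref{lem:nuevocotaptomedio} holds only marginally — I would verify this from the definitions of $H_j$ and $h_j$, i.e. that $q\in B_\ell$ forces $c\in[H_\ell, H_{\ell-1}]$ and hence $c\leqslant H_{\ell-1} = h_{\ell-1} - \varepsilon_{\ell-1}$ wait that's $=h_\ell+\varepsilon_\ell$, so one needs $c\leqslant h_{\ell-1}-\varepsilon_{\ell-1}$, i.e. $H_{\ell-1}\leqslant h_{\ell-1}-\varepsilon_{\ell-1} = H_{\ell-1}$, which holds with equality — so the lemma applies at the boundary, good.
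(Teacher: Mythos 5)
Your plan coincides with the paper's proof: decompose $S_N+N\kappa-T(\ell)$ into the $B_\ell$ term plus, for each $j\neq\ell$, the quantity $r_j\bigl(\Tilde I_j(q)-\tfrac{N}{r_j}I_j(q)-\text{main term}\bigr)$, apply Lemma \ref{lem:nuevocotaptomedio} outside the band and Lemma \ref{comp_paral_band_p} inside, and your lower bound $-r_\ell^2/(4N(1-h_\ell^2))=-1/(1+h_\ell)\geqslant -1$ is exactly the paper's. However, two points in your sketch would need repair. First, your case assignment is reversed: for $j<\ell$ the band $B_j$ lies \emph{above} $q$, so $c\leqslant H_j=h_j-\varepsilon_j$ and Lemma \ref{lem:nuevocotaptomedio} gives the $(1+h_j)$--type bound (this is what matches the first sum in \eqref{eq:T}), while for $j>\ell$ one has $c\geqslant H_{j-1}=h_j+\varepsilon_j$ and gets the $(1-h_j)$--type bound (the second sum); as written, your matching with $T(\ell)$ would fail. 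Also, the factor in $2(1-\log 2)/\ell$ is not a case-counting $2$: it is plain arithmetic, $(1-\log 2)\,r_\ell^3/(2N^2(1-h_\ell)^2)=2(1-\log 2)/\ell$, using the worse $(1-h_\ell)$ branch of Lemma \ref{comp_paral_band_p} (legitimate since $h_\ell\geqslant 0$ for $\ell\leqslant M$).

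The genuine gap is your estimate of the accumulated quartic errors, which is where the explicit constant $\tfrac1{15}$ has to come from. Substituting $r_j=4j$, $N=4M^2$, $1-h_j=j^2/M^2$ gives $r_j\cdot r_j^4/(N^4(1-h_j)^4)=4/j^3$ \emph{exactly}, with no $1/M$ factor, so the contribution of the bands between $B_\ell$ and the equator is not $O(1/M)$ but a constant-size quantity $\tfrac12\bigl(\tfrac56-\log 2\bigr)\cdot 4\sum_{j=\ell+1}^{M}j^{-3}$. It fits below $\tfrac1{15}$ only because the sum starts at $j\geqslant \ell+1\geqslant 2$, hence is at most $4(\zeta(3)-1)\approx 0.81$; the value $4\zeta(3)$ that your sketch points to would overshoot the stated constant. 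The remaining $(1+h_j)$--type errors (the bands above $q$ together with the whole southern hemisphere, folded in via the symmetry $1-h_j=1+h_{2M-j}$, $r_j=r_{2M-j}$) are the only genuinely small part: they are at most $8\sum_{j=1}^{M-1}j^5/M^8\leqslant \tfrac1{30}$, and this is precisely where the hypothesis $M\geqslant 5$ is used. With the cases corrected and these numerics carried out, your argument becomes the paper's proof.
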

\begin{proof}
Let
\begin{align*}
(\star)=&\,S_N + N\kappa-T(\ell)\\
\stackrel{\text{\eqref{eq:kap}}}
=&\,r_\ell \left(\Tilde{I}_\ell(q)-\frac{N}{r_\ell}I_\ell(q)\right)\\
&+\sum_{j=1}^{\ell-1}r_j\left(\Tilde{I}_j(q)-\frac{N}{r_j}I_j(q)-\frac{r_j^2}{12N^2(1+h_j)^2}\right)\\
&+\sum_{j=\ell+1}^{2M-1}r_j\left(\Tilde{I}_j(q)-\frac{N}{r_j}I_j(q)-\frac{r_j^2}{12N^2(1-h_j)^2}\right).
\end{align*}
From Lemmas \ref{lem:nuevocotaptomedio} (with $\varepsilon=r_j/N$) and \ref{comp_paral_band_p} (with $\varepsilon=r_\ell/N$) we have on the one hand
\[
(\star)\geqslant -\frac{ r_\ell^2}{4N(1-h_\ell^2)}=-\frac{1}{2-\frac{\ell^2}{M^2}}\geqslant -1,
\]
and on the other hand
\begin{align*}
(\star)\leqslant & \frac{(1-\log2)\,r_\ell^3}{2N^2(1-h_\ell)^2}+\frac{1}{2}\left(\frac{5}{6}-\log2\right)\left(\sum_{j=1}^{\ell-1}\frac{r_j^{5}}{N^4(1+h_j)^{4}}+
\sum_{j=\ell+1}^{2M-1}\frac{r_j^{5}}{N^4(1-h_j)^{4}}\right)\\
\leqslant &\frac{2(1-\log 2)}{\ell}+\frac{1}{2}\left(\frac{5}{6}-\log2\right)\left(2\sum_{j=1}^{M-1}\frac{r_j^{5}}{N^4(1+h_j)^{4}}+\sum_{j=\ell+1}^{M}\frac{r_j^{5}}{N^4(1-h_j)^{4}}\right)\\
\leqslant &\frac{2(1-\log 2)}{\ell}+\frac{1}{2}\left(\frac{5}{6}-\log2\right)\left(8\sum_{j=1}^{M-1}\frac{j^{5}}{(2M^2-j^2)^{4}}+4\sum_{j=\ell+1}^{M}\frac{1}{j^3}\right)\\
\leqslant &\frac{2(1-\log 2)}{\ell}+\frac{1}{2}\left(\frac{5}{6}-\log2\right)\left(8 \displaystyle\sum_{j=1}^{M-1} \frac{j^5}{M^8} + 4\displaystyle\sum_{j=2}^{\infty}\frac{1}{j^3}\right)\\
\leqslant &\frac{2(1-\log 2)}{\ell}+\frac{1}{2}\left(\frac{5}{6}-\log2\right)\left(\frac{1}{30}+4[\zeta(3)-1]\right),
\end{align*}
where $\zeta(3)$ denotes Ap\'ery's constant. Note that we have used \cite[(0.121.5)]{GR15} and $M\geqslant 5$ to deduce
$$
8\displaystyle\sum_{j=1}^{M-1} \frac{j^5}{M^8}=\frac{2(M-1)^2}{3M^5}\left(2(M-1)-\frac{1}{M}\right)\leqslant \frac1{30}.
$$
The lemma follows after some arithmetic bounding $\log 2\geqslant 0.69$ and $\zeta(3)\leqslant 1.203$.
\end{proof}
The following result offers us a lower and upper bound for $T(\ell)$.

\begin{lemma}\label{lem:aux1}
Let $q\in B_{\ell}$ with $\ell\leqslant M$. Then,
\begin{equation*}
\frac{1}{3}\log\frac{M+1}{\ell+1}\leqslant T(\ell) \leqslant \frac{1}{3}\log\frac{M}{\ell}+\frac{1}{6}.
\end{equation*}
\end{lemma}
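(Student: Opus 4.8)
The plan is to estimate $T(\ell)$ by substituting the explicit values of $r_j$ and $h_j$ from Section~\ref{Sect:PN} and comparing each of the two sums in \eqref{eq:T} against an integral. Recall $r_j=4j$ and $h_j=1-j^2/M^2$ for $1\leqslant j\leqslant M$, so $N=4M^2$ gives $r_j/N=j/M^2$ and $1-h_j=j^2/M^2$. Hence in the second sum of \eqref{eq:T} (indices $j=\ell+1,\dots,2M-1$) the equatorially symmetric ones with $j\leqslant M$ contribute
\[
\frac{r_j^3}{12N^2(1-h_j)^2}=\frac{64j^3}{12N^2}\cdot\frac{M^4}{j^4}=\frac{64 j^3 M^4}{12\cdot16M^4\cdot j^4}=\frac{1}{3j},
\]
and by the symmetry $r_j=r_{2M-j}$, $h_j=-h_{2M-j}$ the terms with $j\geqslant M$ in the first sum of \eqref{eq:T} (those have $1+h_j = 1+h_j$ with $h_j<0$, matching $1-h_{2M-j}$) pair up with the terms $j=1,\dots,\ell-1$... — more carefully, I would split $T(\ell)$ as $\sum_{j=1}^{\ell-1}\frac{r_j^3}{12N^2(1+h_j)^2}$ (north-side, $j<\ell\leqslant M$, so $1+h_j=2-j^2/M^2$) plus $\sum_{j=\ell+1}^{2M-1}\frac{r_j^3}{12N^2(1-h_j)^2}$, and then further split the latter at $j=M$: for $\ell+1\leqslant j\leqslant M$ the summand is exactly $1/(3j)$, while for $M\leqslant j\leqslant 2M-1$ one reindexes $k=2M-j\in\{1,\dots,M-1\}$ and $1-h_j=1-h_{2M-k}=1+h_k=2-k^2/M^2$, so those terms equal $\frac{64(2M-k)^3 M^4}{12\cdot16M^4(2M^2-k^2)^2}$, which matches the shape of the first (north-side) sum.

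So the core of the computation reduces to two elementary pieces. First, the \emph{dominant} part $\sum_{j=\ell+1}^{M}\frac{1}{3j}=\frac13\bigl(H_M-H_\ell\bigr)$, where $H_m=\sum_{i=1}^m 1/i$ is the harmonic number; using the standard bounds $\log(m+1)\leqslant H_m\leqslant \log m + 1$ (more precisely $\log\frac{M+1}{\ell+1}\leqslant H_M-H_\ell$ and $H_M-H_\ell\leqslant \log\frac{M}{\ell}$, the latter since $H_M-\log M$ is decreasing) gives exactly the $\frac13\log\frac{M+1}{\ell+1}$ lower term and the $\frac13\log\frac{M}{\ell}$ main upper term. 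Second, the \emph{remainder} consisting of the north-side sum $\sum_{j=1}^{\ell-1}\frac{r_j^3}{12N^2(2-j^2/M^2)^2}$ together with the reindexed south-cap sum $\sum_{k=1}^{M-1}\frac{64(2M-k)^3 M^4}{12\cdot16M^4(2M^2-k^2)^2}$: both have nonnegative summands, which immediately yields the lower bound $T(\ell)\geqslant\frac13(H_M-H_\ell)\geqslant\frac13\log\frac{M+1}{\ell+1}$. For the upper bound I must show this remainder is at most $1/6$. In each such term $2M^2-j^2\geqslant M^2$ (since $j\leqslant M-1<M\sqrt2$... actually $j\leqslant M$ suffices to get $2M^2-j^2\geqslant M^2$), so each summand is bounded by $\frac{64 j^3 M^4}{12\cdot16 M^4\cdot M^4}=\frac{j^3}{3M^4}$ for the north part, and similarly $(2M-k)^3\leqslant(2M)^3=8M^3$ bounds the south-cap summand by $\frac{8M^3}{3M^4}=\frac{8}{3M}$ — this crude bound is too weak over $k$ up to $M-1$, so there I would keep $2M-k\leqslant 2M$ only where $k$ is small and otherwise note $2M^2-k^2\geqslant 2M^2-(M-1)^2 = M^2+2M-1$; combining, $\sum_{j=1}^{\ell-1}\frac{j^3}{3M^4}\leqslant\frac{1}{3M^4}\cdot\frac{M^4}{4}=\frac{1}{12}$ (using $\sum_{j=1}^{M-1}j^3=\frac{(M-1)^2M^2}{4}\leqslant\frac{M^4}{4}$) and the south-cap part is handled the same way by symmetry to give another $\leqslant\frac1{12}$, for a total remainder $\leqslant\frac16$.

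The main obstacle I anticipate is the bookkeeping around the band endpoints and the poles: the sum in \eqref{eq:T} runs over all $j\neq\ell$ from $1$ to $2M-1$, and I must correctly match each index to its ``distance-to-$q$'' factor $(1\pm h_j)$ and verify that the equatorial reflection $j\leftrightarrow 2M-j$ really does turn the south-hemisphere terms into copies of north-hemisphere terms of the claimed form — in particular checking that no term is double-counted at $j=M$ (where $h_M=0$, so $1-h_M=1+h_M=1$) and that the case $\ell=M$ (band containing the equator) does not break the split. Once the index matching is pinned down, everything else is the elementary harmonic-number estimate plus the crude $\sum j^3\leqslant M^4/4$ bound, so the proof should be short. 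I would write it by: (i) recording $r_j/N=j/M^2$, $1-h_j=j^2/M^2$ for $j\leqslant M$ and the symmetric statements; (ii) rewriting $T(\ell)=\frac13(H_M-H_\ell)+R(\ell)$ with $R(\ell)\geqslant0$ an explicit sum of north-cap and south-cap terms; (iii) bounding $\frac13\log\frac{M+1}{\ell+1}\leqslant \frac13(H_M-H_\ell)\leqslant\frac13\log\frac{M}{\ell}$; and (iv) bounding $0\leqslant R(\ell)\leqslant\frac16$ via $2M^2-j^2\geqslant M^2$ and $\sum_{j=1}^{M-1}j^3\leqslant M^4/4$.
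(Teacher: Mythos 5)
Your proposal follows essentially the same route as the paper: substitute the explicit $r_j$, $h_j$, use the equatorial symmetry to fold the indices $j>M$ onto $1,\dots,M-1$, so that $T(\ell)$ equals the harmonic piece $\frac13\sum_{j=\ell+1}^M\frac1j$ plus a nonnegative remainder, bound the harmonic piece between $\frac13\log\frac{M+1}{\ell+1}$ and $\frac13\log\frac{M}{\ell}$ by integral comparison, and bound the remainder by $\frac16$ using $2M^2-j^2\geqslant M^2$ and $\sum_{j=1}^{M-1}j^3\leqslant M^4/4$, exactly as the paper does. The only blemish is your reindexed south-cap term: since $r_{2M-k}=4k$, its numerator is $64k^3$, not $64(2M-k)^3$, so the folded sum is literally $\sum_{k=1}^{M-1}\frac{k^3}{3(2M^2-k^2)^2}$, identical in form to the north-side sum, and the detour about that bound being ``too weak'' is unnecessary --- your closing appeal to symmetry already yields the correct $\leqslant 1/12$ contribution.
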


\begin{proof}
From Definition \eqref{eq:T} and symmetry with respect to the equator, we have
\begin{align*}
T(\ell)=& \frac{1}{12}\displaystyle\sum_{j=1}^{\ell-1}\frac{r_j^3}{N^2(1+h_j)^2}+\frac{1}{12}\displaystyle\sum_{j=\ell+1}^M \frac{r_j^3}{N^2(1-h_j)^2}+\frac{1}{12}\displaystyle\sum_{j=1}^{M-1}\frac{r_j^3}{N^2(1+h_j)^2}\\
=& \frac{1}{3}\displaystyle\sum_{j=1}^{\ell-1}\frac{j^3}{(2M^2-j^2)^2}+\dfrac{1}{3}\displaystyle\sum_{j={\ell+1}}^M \frac{1}{j}+\frac{1}{3}\displaystyle\sum_{j=1}^{M-1}\frac{j^3}{(2M^2-j^2)^2}.
\end{align*}
Then,
\begin{equation*}
\frac{1}{3}\displaystyle\sum_{j=\ell+1}^M\frac{1}{j}\leqslant T(\ell) \leqslant
\frac{1}{3}\displaystyle\sum_{j=\ell+1}^M \frac{1}{j} +\frac{2}{3}\displaystyle\sum_{j=1}^{M-1}\frac{j^3}{(2M^2-j^2)^2}.
\end{equation*}
Note that $\sum_{j={\ell+1}}^M \frac{1}{j}$ vanishes for $\ell=M$.
The result follows from the following two bounds: on the one hand
\begin{equation*}
\frac{2}{3}\displaystyle\sum_{j=1}^{M-1}\frac{j^3}{(2M^2-j^2)^2} \leqslant \frac{2}{3}\displaystyle\sum_{j=1}^{M-1}\frac{j^3}{M^4}=\frac{(M-1)^2}{6M^2} \leqslant \frac{1}{6},
\end{equation*}
and on the other hand, from Lemma \ref{lem:sumalog} we have
\[
\frac{1}{3}\log\frac{M+1}{\ell+1}<\frac{1}{3}\displaystyle\sum_{j=\ell+1}^M \frac{1}{j}<\frac{1}{3}\log\frac{M}{\ell},
\]
and the proof is concluded.
\end{proof}

\begin{prop}\label{Cociente_Integral_cond_previo}
Let $q\in B_{\ell}$ with $\ell\leqslant M$ and $M \geqslant 5$. Then,
\begin{equation*}
-1\leqslant -1+\frac13\log \frac{M+1}{\ell+1}\leqslant S_N+N\kappa \leqslant \frac13\log \frac{M}{\ell}+\frac{2(1-\log 2)}{\ell}+\frac{1}{4}.
\end{equation*}
In other words, $-N\kappa\approx \sum_j r_j\Tilde I_j(q)$ where the symbol $\approx$ hides essentially $\frac13\log\frac M\ell$.
\end{prop}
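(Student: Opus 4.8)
The plan is to derive Proposition~\ref{Cociente_Integral_cond_previo} simply by combining the two preceding lemmas, since together they already control every quantity appearing in the statement. Lemma~\ref{const_cond} tells us that, for $q\in B_\ell$ with $\ell\leqslant M$ and $M\geqslant 5$,
\[
-1\leqslant S_N+N\kappa-T(\ell)\leqslant \frac{2(1-\log 2)}{\ell}+\frac{1}{15},
\]
so it suffices to add $T(\ell)$ back in and use the two--sided estimate for $T(\ell)$ furnished by Lemma~\ref{lem:aux1}, namely $\frac13\log\frac{M+1}{\ell+1}\leqslant T(\ell)\leqslant \frac13\log\frac{M}{\ell}+\frac16$.

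First I would establish the lower bound. From the left inequality in Lemma~\ref{const_cond} we get $S_N+N\kappa\geqslant T(\ell)-1$, and then the lower bound $T(\ell)\geqslant \frac13\log\frac{M+1}{\ell+1}$ from Lemma~\ref{lem:aux1} yields $S_N+N\kappa\geqslant -1+\frac13\log\frac{M+1}{\ell+1}$. Since $\ell\leqslant M$ the logarithm is nonnegative, so this is in turn at least $-1$, giving the full chain $-1\leqslant -1+\frac13\log\frac{M+1}{\ell+1}\leqslant S_N+N\kappa$ as claimed.

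Next I would do the upper bound. From the right inequality in Lemma~\ref{const_cond}, $S_N+N\kappa\leqslant T(\ell)+\frac{2(1-\log 2)}{\ell}+\frac{1}{15}$, and plugging in $T(\ell)\leqslant \frac13\log\frac{M}{\ell}+\frac16$ gives
\[
S_N+N\kappa\leqslant \frac13\log\frac{M}{\ell}+\frac{2(1-\log 2)}{\ell}+\frac16+\frac1{15}.
\]
The only thing left is the arithmetic check that $\frac16+\frac1{15}\leqslant \frac14$, which is immediate since $\frac16+\frac1{15}=\frac{7}{30}<\frac{7.5}{30}=\frac14$. This produces the stated upper bound $\frac13\log\frac{M}{\ell}+\frac{2(1-\log 2)}{\ell}+\frac14$, and the informal ``$\approx$'' remark follows by observing that all the non--logarithmic terms are $O(1)$.

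I do not anticipate a genuine obstacle here: the proposition is essentially a bookkeeping corollary of Lemmas~\ref{const_cond} and~\ref{lem:aux1}, and the only mild subtlety is making sure the constant consolidation ($\frac1{15}+\frac16\leqslant\frac14$) and the sign of $\log\frac{M+1}{\ell+1}$ are handled so that the displayed four--term chain of inequalities reads correctly. If one wanted to be careful one should also note that the hypotheses of both lemmas ($q\in B_\ell$, $\ell\leqslant M$, $M\geqslant 5$) match exactly, so no extra case analysis is needed.
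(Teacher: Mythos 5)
Your proposal is correct and coincides with the paper's own proof, which simply states that the proposition is immediate from Lemmas~\ref{const_cond} and~\ref{lem:aux1}; your explicit bookkeeping (including the check $\tfrac16+\tfrac1{15}=\tfrac{7}{30}\leqslant\tfrac14$ and the nonnegativity of $\log\tfrac{M+1}{\ell+1}$) is exactly the intended argument.
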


\begin{proof}
Immediate from lemmas \ref{const_cond} and \ref{lem:aux1}.
\end{proof}


\section{The numerator of the condition number formula}\label{sec:numerador}
This section is devoted to get a upper bound for the term
\begin{equation*}
\log\displaystyle\prod_{i=1}^N |p-p_i| - S_N,
\end{equation*}
with $S_N=S_N(p)$ the discrete sum defined in \eqref{Sum_SN}, thus producing an upper bound for the numerator in \eqref{exp_cond_S2}. We need some technical lemmata.
\begin{lemma}\label{lem:int}
Given $x,y\in\mathbb{R}$ and $\varphi \in [0,2\pi/r]$, we have
\[
\prod_{i=0}^{r-1}\left(x^2+y^2-2xy\cos\left(\varphi+\frac{2\pi i}{r}\right)\right)=x^{2r}+y^{2r}-2x^ry^r\cos(r\varphi).
\]
\end{lemma}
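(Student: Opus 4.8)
The identity to prove is
\[
\prod_{i=0}^{r-1}\left(x^2+y^2-2xy\cos\left(\varphi+\frac{2\pi i}{r}\right)\right)=x^{2r}+y^{2r}-2x^ry^r\cos(r\varphi),
\]
and the natural approach is to recognize both sides as factorizations of the same complex polynomial. The plan is to write each quadratic factor on the left as a product of two conjugate linear factors over $\mathbb C$. Concretely, using $x^2+y^2-2xy\cos\theta=(x-ye^{i\theta})(x-ye^{-i\theta})$, the left-hand side becomes
\[
\prod_{i=0}^{r-1}\left(x-ye^{i(\varphi+2\pi i/r)}\right)\left(x-ye^{-i(\varphi+2\pi i/r)}\right).
\]

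The key step is then to evaluate the two subproducts separately. Setting $\omega=e^{2\pi i/r}$, the first subproduct is $\prod_{i=0}^{r-1}(x-ye^{i\varphi}\omega^i)$, and since $\{\omega^i\}_{i=0}^{r-1}$ runs over all $r$-th roots of unity, we have the standard identity $\prod_{i=0}^{r-1}(u-v\omega^i)=u^r-v^r$ applied with $u=x$ and $v=ye^{i\varphi}$, giving $x^r-y^re^{ir\varphi}$. The same computation on the conjugate subproduct gives $x^r-y^re^{-ir\varphi}$. Multiplying these two,
\[
\left(x^r-y^re^{ir\varphi}\right)\left(x^r-y^re^{-ir\varphi}\right)=x^{2r}-x^ry^r\left(e^{ir\varphi}+e^{-ir\varphi}\right)+y^{2r}=x^{2r}+y^{2r}-2x^ry^r\cos(r\varphi),
\]
which is exactly the right-hand side.

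I do not expect any real obstacle here: the only things to check are the elementary factorization $u^r-v^r=\prod_{i=0}^{r-1}(u-v\omega^i)$ and the pairing of complex-conjugate factors, and the hypothesis $x,y\in\mathbb R$ guarantees that $ye^{i\theta}$ and $ye^{-i\theta}$ are genuine complex conjugates so that each real quadratic really does split as claimed. (The restriction $\varphi\in[0,2\pi/r]$ plays no role in the algebraic identity itself; it is presumably stated only because that is the range in which the lemma will be applied when the $p_{j,k}$ are the equally spaced points on a parallel.) One could alternatively give a proof by induction on $r$ or via Chebyshev-type recursions, but the root-of-unity factorization is cleanest and I would present that.
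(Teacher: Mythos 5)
Your proof is correct. Note, however, that the paper does not actually prove this lemma at all: its ``proof'' consists of a single citation to equation 1.394 of Gradshteyn--Ryzhik \cite{GR15}, which is precisely this product formula. What you have written is the standard derivation of that table identity: factor each real quadratic as $(x-ye^{i\theta})(x-ye^{-i\theta})$, group the two families of linear factors, apply $\prod_{i=0}^{r-1}(u-v\omega^i)=u^r-v^r$ with $\omega=e^{2\pi i/r}$, $u=x$, $v=ye^{\pm i\varphi}$, and multiply the two resulting conjugate expressions. Every step checks out, so your version has the advantage of making the lemma self-contained rather than outsourced to a table of formulas. Two small remarks for the write-up: you use $i$ both as the product index and as the imaginary unit, which you should disambiguate (e.g.\ index by $k$); and your aside about needing $x,y\in\mathbb{R}$ for the conjugate pairing is not really necessary, since both the quadratic factorization and the final expansion are polynomial identities valid for arbitrary $x,y$ (the real hypothesis, like the restriction $\varphi\in[0,2\pi/r]$, merely reflects how the lemma is used later). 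Your observation that $\varphi\in[0,2\pi/r]$ plays no role in the identity itself is accurate.
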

\begin{proof}
See \cite[eq. 1.394]{GR15}.
\end{proof}
Next we are going to give an exact expression for the quantity
\begin{equation}\label{eq:1}
\Theta=\prod_{i=0}^{r-1}|p-q_i|^2,
\end{equation}
where $p=(\sqrt{1-c^2},0,c)$ is a spherical point and $q_0,\ldots, q_{r-1}$ are $r$ equidistributed points in the parallel of height $h$, that is
\[
q_i=\left(\sqrt{1-h^2}\cos\left(\varphi+\frac{2\pi i}{r}\right),\sqrt{1-h^2}\sin\left(\varphi+\frac{2\pi i}{r}\right),h\right),
\]
with $\varphi\in[0,2\pi/r]$ any phase representing that the points can be in any position.

\begin{lemma}\label{lem:xey}
We have $\Theta=x^{2r}+y^{2r}-2x^ry^r\cos(r\varphi)$, where
\begin{align*}
x=&\sqrt{1-c}\sqrt{1+h},\\
y=&\sqrt{1+c}\sqrt{1-h}.
\end{align*}
In particular, the minimum and maximum of $\Theta$ are reached respectively in $\varphi=0$ and $\varphi=\pi/r$ and we get
\[
 |x^r-y^r|^2 \leqslant \Theta\leqslant |x^r+y^r|^2.
\]
\end{lemma}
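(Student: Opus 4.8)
The plan is to compute $\Theta$ directly by expressing $|p-q_i|^2$ in the form required by Lemma \ref{lem:int}. First I would write out the squared Euclidean distance between the point $p=(\sqrt{1-c^2},0,c)$ and a generic point $q_i$ in the parallel of height $h$ with phase $\varphi$. Expanding,
\[
|p-q_i|^2 = (1-c^2) + (1-h^2) + c^2 + h^2 - 2\sqrt{(1-c^2)(1-h^2)}\cos\!\left(\varphi+\tfrac{2\pi i}{r}\right) - 2ch,
\]
which simplifies to $2 - 2ch - 2\sqrt{(1-c^2)(1-h^2)}\cos(\varphi+\tfrac{2\pi i}{r})$. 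The key algebraic step is to recognize this as $x^2 + y^2 - 2xy\cos(\varphi+\tfrac{2\pi i}{r})$ with $x,y$ to be determined: we need $x^2+y^2 = 2-2ch$ and $xy = \sqrt{(1-c^2)(1-h^2)} = \sqrt{(1-c)(1+c)(1-h)(1+h)}$. Setting $x=\sqrt{(1-c)(1+h)}$ and $y=\sqrt{(1+c)(1-h)}$ one checks $xy=\sqrt{(1-c^2)(1-h^2)}$ immediately, and $x^2+y^2 = (1-c)(1+h)+(1+c)(1-h) = 2 - 2ch$ after expanding. So the candidate $x,y$ from the statement work, and they are manifestly real and nonnegative for $c,h\in[-1,1]$.

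Having matched the form, I would apply Lemma \ref{lem:int} with this $x$ and $y$ (the hypothesis $\varphi\in[0,2\pi/r]$ is exactly the one imposed here) to conclude
\[
\Theta = \prod_{i=0}^{r-1}|p-q_i|^2 = x^{2r} + y^{2r} - 2x^r y^r \cos(r\varphi),
\]
which is the first assertion. For the extremal claim, note that $x^r, y^r \geqslant 0$, so $\Theta = x^{2r}+y^{2r} - 2x^r y^r\cos(r\varphi)$ is a decreasing function of $\cos(r\varphi)$; as $\varphi$ ranges over $[0,2\pi/r]$, $\cos(r\varphi)$ ranges over $[-1,1]$, attaining $+1$ at $\varphi=0$ (giving the minimum $\Theta = (x^r-y^r)^2$) and $-1$ at $\varphi=\pi/r$ (giving the maximum $\Theta = (x^r+y^r)^2$). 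This yields the sandwich $|x^r-y^r|^2 \leqslant \Theta \leqslant |x^r+y^r|^2$.

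There is no real obstacle here — the whole lemma is a bookkeeping exercise once Lemma \ref{lem:int} is in hand; the only thing requiring a moment's care is the correct bracketing of the factors $(1\pm c)$ and $(1\pm h)$ so that the cross term $xy$ comes out to $\sqrt{(1-c^2)(1-h^2)}$ rather than something with the wrong sign pattern, and verifying that $x^2+y^2$ collapses to $2-2ch$. I would present the distance expansion, the verification of the two identities for $x^2+y^2$ and $xy$, the invocation of Lemma \ref{lem:int}, and the monotonicity-in-$\cos(r\varphi)$ argument, in that order.
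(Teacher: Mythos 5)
Your proof is correct and follows essentially the same route as the paper: both reduce $|p-q_i|^2$ to the form $x^2+y^2-2xy\cos(\varphi+2\pi i/r)$ (the paper via $|p-q_i|^2=2-2\langle p,q_i\rangle$, you via coordinate expansion), then invoke Lemma \ref{lem:int} and bound $\cos(r\varphi)$ by $\pm 1$. Your explicit verification that $x^2+y^2=2-2ch$ and $xy=\sqrt{(1-c^2)(1-h^2)}$ is a welcome detail the paper leaves implicit, but the argument is the same.
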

\begin{proof}
We write
\begin{align*}
\Theta=&\prod_{i=0}^{r-1}|p-q_i|^2\\
=&\prod_{i=0}^{r-1}(2-2\langle p,q_i\rangle)\\
=&\prod_{i=0}^{r-1}\left(2-2\sqrt{1-h^2}\sqrt{1-c^2}\cos\left(\varphi+\frac{2\pi i}{r}\right)-2hc\right).
\end{align*}
Taking $x$ and $y$ as in the statement above and applying Lema \ref{lem:int} we deduce that
\begin{align*}
\Theta=&x^{2r}+y^{2r}-2x^ry^r\cos(r\varphi),
\end{align*}
that is the first assertion of Lemma. The second one is direct: as the maximum and the minimum value of the cosine are $\pm1$ it is enough to notice that
\[
x^{2r}+y^{2r}\pm2x^ry^r=|x^r\pm y^r|^2.
\]
\end{proof}


\begin{prop}\label{ImpCond_cota_num}
Let $p\in \mathbb{S}$ and let $S_N$ be as in \eqref{Sum_SN}. Denote by $p_i \in \mathcal{P}_N$ the points in our collection $\mathcal{P}_N$. Then, for $M\geqslant 5$,
\begin{equation*}
\log\displaystyle\prod_{k=1}^N |p-p_k| \leqslant S_N + \log 2 + \frac12.
\end{equation*}
\end{prop}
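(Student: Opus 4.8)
The plan is to decompose the product over all $N$ points into a product over the $2M-1$ parallels, estimate each parallel's contribution via Lemma \ref{lem:xey}, and compare each of these to the corresponding term $r_j\Tilde I_j(p)$ in $S_N$. Write $p=(\sqrt{1-c^2},0,c)$ without loss of generality (the quantity is rotation-invariant about the $z$--axis), say with $p\in B_\ell$, and for each $j$ set $\Theta_j=\prod_{k}|p-p_{j,k}|^2$ where $p_{j,1},\dots,p_{j,r_j}$ are the $r_j$ equally spaced points on $Q_j$. By Lemma \ref{lem:xey}, $\Theta_j\le |x_j^{r_j}+y_j^{r_j}|^2$ with $x_j=\sqrt{(1-c)(1+h_j)}$ and $y_j=\sqrt{(1+c)(1-h_j)}$, so
\[
\log\prod_{k=1}^N|p-p_k|=\frac12\sum_{j=1}^{2M-1}\log\Theta_j\le\sum_{j=1}^{2M-1}\log\bigl(x_j^{r_j}+y_j^{r_j}\bigr).
\]
On the other hand, reading off \eqref{Valor_Itildej}, one checks that $r_j\Tilde I_j(p)=r_j\log\max(x_j,y_j)=\log\max(x_j^{r_j},y_j^{r_j})$: indeed if $h_j\ge c$ then $(1+h_j)(1-c)\ge (1-h_j)(1+c)$, i.e. $x_j\ge y_j$, and $r_j\Tilde I_j=\tfrac{r_j}2(\log(1+h_j)+\log(1-c))=r_j\log x_j$; symmetrically in the other case. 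Hence
\[
\log\prod_{k=1}^N|p-p_k|-S_N\le\sum_{j=1}^{2M-1}\log\frac{x_j^{r_j}+y_j^{r_j}}{\max(x_j^{r_j},y_j^{r_j})}=\sum_{j=1}^{2M-1}\log\bigl(1+t_j^{r_j}\bigr),
\]
where $t_j=\min(x_j,y_j)/\max(x_j,y_j)\in(0,1]$.

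So the task reduces to showing $\sum_{j=1}^{2M-1}\log(1+t_j^{r_j})\le\log 2+\tfrac12$. The term $j=\ell$ (the band containing $p$) is the dangerous one: there $t_\ell$ can be as large as $1$, contributing up to $\log 2$ — this accounts for the $\log 2$ in the statement, and it is the main obstacle, since one must show every other band contributes a total of at most $\tfrac12$. For the remaining bands I would use $\log(1+t_j^{r_j})\le t_j^{r_j}$ and bound $t_j$ explicitly: writing $u_j=(1+h_j)(1-c)$ and $v_j=(1-h_j)(1+c)$, we have $t_j^2=\min(u_j,v_j)/\max(u_j,v_j)$, and the ratio $u_j/v_j$ is monotone in $j$, so $t_j$ decays geometrically as $j$ moves away from $\ell$ in either direction. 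More precisely, since $p\in B_\ell$ means $H_\ell\le c\le H_{\ell-1}$, a direct computation gives a bound of the shape $t_j^2\le (1-c)(1+h_j)/((1-c)(1+h_\ell)+\text{gap})$ for $j>\ell$ and symmetrically for $j<\ell$, with the gaps between consecutive $h_j$ being $h_{j-1}-h_j=(2j-1)/M^2=\Theta(j/N)$ while $r_j=4j$; feeding this into the sum makes $\sum_{j\ne\ell}t_j^{r_j}$ a rapidly converging series whose value I expect to bound by $\tfrac12$ using $M\ge5$, exactly as in the analogous estimates of Lemma \ref{const_cond} (summing a geometric-type series and invoking $\zeta$-type constants).

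The remaining care is bookkeeping: handling the cases $\ell=1$ and $\ell$ near $M$ (where one side of the sum is empty or short), using the equatorial symmetry $h_j=-h_{2M-j}$, $r_j=r_{2M-j}$ to fold the two tails together, and checking the monotonicity claim $u_j/v_j$ increasing in $j$ (clear since $h_j$ is decreasing). If the crude bound $\log(1+t_j^{r_j})\le t_j^{r_j}$ together with $M\ge5$ does not quite close the $\tfrac12$ budget, the fallback is to also extract the $j=\ell\pm1$ neighbours by hand (they are the next-largest terms) and bound the farther tail separately — but I expect the geometric decay to be fast enough that this refinement is unnecessary.
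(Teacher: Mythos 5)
Your proposal follows essentially the same route as the paper's proof: bound each parallel's contribution via Lemma \ref{lem:xey}, identify $S_N=\sum_j \log\max\bigl(x_j^{r_j},y_j^{r_j}\bigr)$ from \eqref{Valor_Itildej} (your $\max$ formulation is a mild streamlining of the paper's case split at the home band, which there supplies the $\log 2$), and control the remainder with $\log(1+\alpha)\leqslant\alpha$. The only step you leave unverified, the tail bound $\sum_{j\neq\ell}t_j^{r_j}\leqslant\tfrac12$, is exactly what the paper establishes (after bounding $c$ by $H_{\ell-1}$ resp.\ $H_\ell$, the three resulting sums are at most $\tfrac14+\tfrac16+\tfrac1{20}<\tfrac12$ for $M\geqslant 5$, via Lemmas \ref{L_acot_sum_alpha} and \ref{L_acot_sum_2alpha}); note the margin is modest, since the neighbouring bands $j=\ell\pm1$ can each contribute roughly $e^{-2}$, so the careful treatment of those terms that you mention as a fallback is in fact needed.
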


\begin{proof}
Without loss of generality, we take $p=(\sqrt{1-c^2},0,c)$ belonging to the band $B_{\ell}$, with $1 \leqslant \ell \leqslant M$. Let us denote by $q_{j,0},\ldots,q_{j,r_j-1}$ the $r_j$ equidistributed points in the $j$--th parallel.
From Lemma \ref{lem:xey}, we have
\begin{equation}\label{cota_prod_s}
\log\prod_{k=1}^N| p-p_k|=\log\prod_{j=1}^{2M-1}\prod_{i=0}^{r_j-1}| p-q_{j,i}|\leqslant \sum_{j=1}^{2M-1}\log| x_{h_j,c}^{r_j}+y_{h_j,c}^{r_j}|,
\end{equation}
where
\begin{align*}
x_{h_j,c}=&\sqrt{1-c}\sqrt{1+h_j},\\
y_{h_j,c}=&\sqrt{1+c}\sqrt{1-h_j}.
\end{align*}
Note that the bound obtained in \eqref{cota_prod_s} can be rewritten as
\begin{align}
\nonumber \sum_{j=1}^{2M-1}\log & | x_{h_j,c}^{r_j}+y_{h_j,c}^{r_j}| \\
& = \displaystyle\sum_{j=1}^{\ell-1} r_j\log x_{h_j,c} + \displaystyle\sum_{j=\ell+1}^{2M-1}r_j\log y_{h_j,c}
+\log| x_{h_{\ell},c}^{r_{\ell}}+y_{h_{\ell},c}^{r_{\ell}}| \label{cota_ProPP}\\
\nonumber & \hspace*{0.3cm} + \sum_{j=1}^{\ell-1}\log| 1+y_{h_j,c}^{r_j}/x_{h_j,c}^{r_j}|+\sum_{j=\ell+1}^{2M-1}\log|1+x_{h_j,c}^{r_j}/y_{h_j,c}^{r_j}|.
\end{align}
We know that $c\in[H_{\ell},H_{\ell-1}]$. Then, for $1\leqslant j\leqslant \ell-1$ we have $h_j\geqslant c$ and hence $x_{h_j,c}\geqslant y_{h_j,c}$. Reciprocally, for $\ell+1\leqslant
 j\leqslant 2M-1$ we get $x_{h_j,c} \leqslant y_{h_j,c}$. Moreover, if $c\in[H_{\ell},h_{\ell}]$ then $h_{\ell} \geqslant c$ and $x_{h_{\ell},c}\geqslant y_{h_{\ell},c}$, so
$$
\log |x_{h_{\ell},c}^{r_{\ell}}+y_{h_{\ell},c}^{r_{\ell}}| = r_{\ell}\log x_{h_{\ell},c} + \log |1+y_{h_{\ell},c}^{r_{\ell}}/x_{h_{\ell},c}^{r_{\ell}}| \leqslant r_{\ell}\log x_{h_{\ell},c} +\log 2,
$$
and from \eqref{Valor_Itildej} we deduce that
$$
\eqref{cota_ProPP} \leqslant \log 2 + \displaystyle\sum_{j=1}^{\ell} r_j\log x_{h_j,c} +\displaystyle\sum_{j=\ell+1}^{2M-1}r_j\log y_{h_j,c}=\log 2 +\sum_{j=1}^{2M-1}r_j \Tilde{I}_j (p).
$$
It is easy to check that this inequality also holds for $c\in(h_{\ell},H_{\ell-1}]$. 
In any case, we obtain
\begin{align*}
\sum_{j=1}^{2M-1}\log| x_{h_j,c}^{r_j}+y_{h_j,c}^{r_j}| &\leqslant \log 2 + \sum_{j=1}^{2M-1}r_j\Tilde{I}_j (p)\\
&\hspace*{0.25cm}+ \sum_{j=1}^{\ell-1}\log| 1+y_{h_j,c}^{r_j}/x_{h_j,c}^{r_j}|+\sum_{j=\ell+1}^{2M-1}\log|1+x_{h_j,c}^{r_j}/y_{h_j,c}^{r_j}|.
\end{align*}
By \eqref{Sum_SN} and using $\log(1+\alpha)\leqslant\alpha$ with $\alpha>0$, we have
\begin{align*}
\sum_{j=1}^{2M-1}\log| x_{h_j,c}^{r_j}+y_{h_j,c}^{r_j}|\leqslant & \,\,\log 2 + S_N\\
&+\sum_{j=1}^{\ell-1}\left(\frac{(1+c)(1-h_j)}{(1-c)(1+h_j)}\right)^{r_j/2}+\sum_{j=\ell+1}^{2M-1}\left(\frac{(1-c)(1+h_j)}{(1+c)(1-h_j)}\right)^{r_j/2},
\end{align*}
and bounding $H_{\ell}\leqslant c\leqslant H_{\ell-1}$ it is easy to see that
\begin{multline}
\log\prod_{k=1}^N| p-p_k| \leqslant  \log 2+S_N\\
+\sum_{j=1}^{\ell-1}\left(\frac{(1+H_{\ell-1})(1-h_j)}{(1-H_{\ell-1})(1+h_j)}\right)^{r_j/2}+
\sum_{j=\ell+1}^{2M-1}\left(\frac{(1-H_{\ell})(1+h_j)}{(1+H_{\ell})(1-h_j)}\right)^{r_j/2} \label{acot_sum}.
\end{multline}
Next, we are going to bound the two sums in \eqref{acot_sum}.
Notice that for $\ell=1$, the first sum vanishes. We have
\begin{align*}
\eqref{acot_sum} &= \sum_{j=1}^{\ell-1}\left(\frac{(1+H_{\ell-1})(1-h_j)}{(1-H_{\ell-1})(1+h_j)}\right)^{r_j/2}+\sum_{j=\ell+1}^{M}\left(\frac{(1-H_{\ell})(1+h_j)}{(1+H_{\ell})(1-h_j)}\right)^{r_j/2}\\
& \hspace*{0.5cm}
+\sum_{j=1}^{M-1}\left(\frac{(1-H_{\ell})(1-h_j)}{(1+H_{\ell})(1+h_j)}\right)^{r_j/2}\\
& = \sum_{j=1}^{\ell-1}\left(\frac{(2M^2-\ell(\ell-1))j^2}{\ell(\ell-1)(2M^2-j^2)}\right)^{2j}+\sum_{j=\ell+1}^{M}\left(\frac{\ell(\ell+1)(2M^2-j^2)}{(2M^2-\ell(\ell+1))j^2}\right)^{2j}\\
&\hspace*{0.5cm}+\sum_{j=1}^{M-1}\left(\frac{\ell(\ell+1)j^2}{(2M^2-\ell(\ell+1))(2M^2-j^2)}\right)^{2j}\\
& \leqslant \displaystyle\sum_{j=1}^{\ell-1} \left(\frac{j^2}{\ell(\ell-1)}\right)^{2j}+\displaystyle\sum_{j=\ell+1}^{M} \left(\frac{\ell(\ell+1)}{j^2}\right)^{2j}+\frac{3}{2}\displaystyle\sum_{j=1}^{M-1} \left(\frac{j}{M}\right)^{4j}\\
& \leqslant \frac12,
\end{align*}
since by Lemmas \ref{L_acot_sum_alpha} and \ref{L_acot_sum_2alpha} one can deduce that
\begin{align*}
\displaystyle\sum_{j=1}^{\ell-1} \left(\frac{j^2}{\ell(\ell-1)}\right)^{2j} &= \displaystyle\sum_{j=1}^{\ell-2} \left(\frac{j^2}{\ell(\ell-1)}\right)^{2j} + \left(\frac{\ell-1}{\ell}\right)^{2(\ell-1)}
\leqslant \frac{1}{4},
\\
\displaystyle\sum_{j=\ell+1}^{M} \left(\frac{\ell(\ell+1)}{j^2}\right)^{2j} &= \displaystyle\sum_{j=\ell+2}^{M} \left(\frac{\ell(\ell+1)}{j^2}\right)^{2j} + \left(\frac{\ell}{\ell+1}\right)^{2(\ell+1)}\\&\leqslant \displaystyle\sum_{j=\ell+2}^{M} \left(\frac{\ell+1}{j}\right)^{4j} + e^{-2} \leqslant \frac{1}{6},\\
\\
\frac{3}{2}\displaystyle\sum_{j=1}^{M-1} \left(\frac{j}{M}\right)^{4j}  &\leqslant \frac{1}{20}.
\end{align*}
\end{proof}
The final outcome of this section will be used in the proof of our main theorem:
\begin{corollary}\label{cor:C}
If $p\in B_\ell$ with $\ell\leqslant M$ and $M\geqslant 5$, then
\[
\prod_{k=1}^N |p-p_k|\leqslant 2e^{-\kappa N}\left(\frac{M}{\ell}\right)^{1/3}e^{3/4}\left(\frac{e}{2}\right)^{2/\ell}.
\]
\end{corollary}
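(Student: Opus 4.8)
The plan is to combine Proposition~\ref{ImpCond_cota_num} with Proposition~\ref{Cociente_Integral_cond_previo}, exponentiating the additive estimate on $\log\prod_k|p-p_k|$ into a multiplicative one. First I would take $p\in B_\ell$ with $\ell\leqslant M$ and $M\geqslant 5$, and apply Proposition~\ref{ImpCond_cota_num} to get
\[
\log\prod_{k=1}^N|p-p_k|\leqslant S_N+\log 2+\tfrac12,
\]
where $S_N=S_N(p)$ is the discrete sum in \eqref{Sum_SN}. Then I would invoke the upper bound in Proposition~\ref{Cociente_Integral_cond_previo}, namely $S_N\leqslant -N\kappa+\frac13\log\frac M\ell+\frac{2(1-\log 2)}{\ell}+\frac14$, to substitute for $S_N$ and obtain
\[
\log\prod_{k=1}^N|p-p_k|\leqslant -N\kappa+\tfrac13\log\tfrac M\ell+\tfrac{2(1-\log2)}{\ell}+\tfrac14+\log 2+\tfrac12.
\]

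Next I would exponentiate. The term $\frac13\log\frac M\ell$ becomes $(M/\ell)^{1/3}$; the constant $\frac14+\log 2+\frac12=\frac34+\log 2$ exponentiates to $2e^{3/4}$; and the term $\frac{2(1-\log2)}{\ell}$ exponentiates to $e^{2(1-\log 2)/\ell}=(e/2)^{2/\ell}$, since $e^{1-\log 2}=e/2$. Collecting these factors gives exactly
\[
\prod_{k=1}^N|p-p_k|\leqslant 2e^{-\kappa N}\left(\frac M\ell\right)^{1/3}e^{3/4}\left(\frac e2\right)^{2/\ell},
\]
which is the claimed inequality. So the proof is essentially a one-line deduction: apply the two propositions in sequence and rewrite the resulting sum of constants as a product of exponentials.

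Since every ingredient is already established in the excerpt, there is no real obstacle here; the only thing to be careful about is the bookkeeping of the constants when exponentiating — in particular correctly identifying $e^{1-\log 2}=e/2$ so that the $2/\ell$-power term takes the stated form, and checking that $\frac14+\frac12+\log 2$ produces precisely the factor $2e^{3/4}$ rather than something off by a power of $e$. I would also note explicitly that the hypotheses $\ell\leqslant M$ and $M\geqslant 5$ needed by both Proposition~\ref{ImpCond_cota_num} and Proposition~\ref{Cociente_Integral_cond_previo} are exactly the hypotheses of the corollary, so no extra case analysis is required.

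\begin{proof}
Let $p\in B_\ell$ with $\ell\leqslant M$ and $M\geqslant 5$, and let $S_N=S_N(p)$ be as in \eqref{Sum_SN}. By Proposition~\ref{ImpCond_cota_num},
\[
\log\prod_{k=1}^N|p-p_k|\leqslant S_N+\log 2+\tfrac12,
\]
and by the upper bound in Proposition~\ref{Cociente_Integral_cond_previo},
\[
S_N\leqslant -N\kappa+\tfrac13\log\tfrac M\ell+\tfrac{2(1-\log 2)}{\ell}+\tfrac14.
\]
Combining the two inequalities,
\[
\log\prod_{k=1}^N|p-p_k|\leqslant -N\kappa+\tfrac13\log\tfrac M\ell+\tfrac{2(1-\log 2)}{\ell}+\tfrac34+\log 2.
\]
Exponentiating and using $e^{1-\log 2}=e/2$, together with $e^{3/4+\log 2}=2e^{3/4}$, we obtain
\[
\prod_{k=1}^N|p-p_k|\leqslant e^{-\kappa N}\left(\frac M\ell\right)^{1/3}\left(\frac e2\right)^{2/\ell}\cdot 2e^{3/4}=2e^{-\kappa N}\left(\frac M\ell\right)^{1/3}e^{3/4}\left(\frac e2\right)^{2/\ell},
\]
as claimed.
\end{proof}
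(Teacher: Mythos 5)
Your proof is correct and is exactly the paper's argument: the paper dismisses the corollary as immediate from Propositions \ref{ImpCond_cota_num} and \ref{Cociente_Integral_cond_previo}, and you have simply written out that one-line combination, with the constants ($\tfrac14+\tfrac12+\log 2$ exponentiating to $2e^{3/4}$ and $e^{2(1-\log 2)/\ell}=(e/2)^{2/\ell}$) checked correctly.
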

\begin{proof}
  Immediate from  propositions  \ref{ImpCond_cota_num} and \ref{Cociente_Integral_cond_previo}.
\end{proof}


\section{The denominator of the condition number formula}\label{sec:denominador}

The results obtained in the previous section give us an upper bound for the numerator of \eqref{exp_cond_S2}.
Now, for any fixed $i=1,\ldots,N$, we need a lower bound for the denominator.

\begin{lemma}\label{prod_puntos_circunf_1}
Let $r$ be fixed and let $p_0,\ldots,p_{r-1}$ be $r$ equidistributed points on the unit circumference. Then
\begin{equation*}
\log \prod_{k=1}^{r-1} |p_k-p_0|=\log r.
\end{equation*}
\end{lemma}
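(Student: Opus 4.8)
The plan is to identify the product $\prod_{k=1}^{r-1}|p_k-p_0|$ with the evaluation of a cyclotomic-type polynomial. Place the $r$ equidistributed points on the unit circumference as the $r$-th roots of unity, $p_k=e^{2\pi i k/r}$ for $k=0,\dots,r-1$ (any global rotation leaves all the distances $|p_k-p_0|$ unchanged, so this is no loss of generality). Then $p_k-p_0=e^{2\pi i k/r}-1$, and the quantity we want is $\bigl|\prod_{k=1}^{r-1}(e^{2\pi i k/r}-1)\bigr|$.

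The key algebraic fact is that $z^r-1=\prod_{k=0}^{r-1}(z-e^{2\pi i k/r})$, so dividing out the $k=0$ factor gives $\dfrac{z^r-1}{z-1}=1+z+\cdots+z^{r-1}=\prod_{k=1}^{r-1}(z-e^{2\pi i k/r})$. Evaluating this identity at $z=1$ yields $\prod_{k=1}^{r-1}(1-e^{2\pi i k/r})=r$. Since $|1-e^{2\pi i k/r}|=|e^{2\pi i k/r}-1|=|p_k-p_0|$, we get $\prod_{k=1}^{r-1}|p_k-p_0|=|r|=r$, and taking logarithms gives the claim. Alternatively, one can invoke Lemma \ref{lem:int} (or the reference \cite[eq. 1.394]{GR15}) with $x=1$, $y$ a suitable value and a limiting argument, but the direct root-of-unity computation is cleaner and self-contained.

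There is essentially no obstacle here: the only point requiring a line of justification is the reduction to roots of unity, which follows because the map $p\mapsto e^{i\theta}p$ is an isometry of the circle and permutes any equidistributed configuration onto the standard one while fixing the reference point up to that same rotation; all pairwise distances, and in particular $\{|p_k-p_0|\}$, are preserved. The rest is the factorization of $z^r-1$ and evaluation at $z=1$.
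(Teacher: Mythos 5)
Your proof is correct and is essentially identical to the paper's: both rotate the configuration to the standard $r$-th roots of unity, factor $z^r-1=(z-1)(1+z+\cdots+z^{r-1})=\prod_{k=0}^{r-1}(z-e^{2\pi i k/r})$, cancel the $z-1$ factor, and evaluate at $z=1$ to get the product equal to $r$. No further comment is needed.
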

\begin{proof}
This is a basic exercise in complex number theory: rotate the points in such a way that $p_{k}=e^{2i\pi k/r}$. Since the polynomial $z^r-1$ has the $r$ roots of unity as zeros, we have $\prod_{k=0}^{r-1}(z-p_{k})=z^r-1=(z-1)(1+z+\ldots+z^{r-1})$. Removing the factor $z-1$ from both terms and substituting $z=1$ we get the result.
\end{proof}
The following is an inmediate consequence:
\begin{corollary}\label{prod_puntos_circunf_r}
Let $r$ be fixed and let $p_0,\ldots,p_{r-1}$ be $r$ equidistributed points in a circumference of radius $s$. Then
\begin{equation*}
\log \prod_{k=1}^{r-1} |p_k-p_0|=\log r + (r-1)\log s.
\end{equation*}
\end{corollary}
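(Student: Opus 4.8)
The statement to prove is Corollary \ref{prod_puntos_circunf_r}: if $p_0,\dots,p_{r-1}$ are $r$ equidistributed points on a circle of radius $s$, then $\log\prod_{k=1}^{r-1}|p_k-p_0| = \log r + (r-1)\log s$. The plan is to reduce this directly to the unit-circle case already established in Lemma \ref{prod_puntos_circunf_1} by a scaling argument.

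First I would observe that the points on a circle of radius $s$ are obtained from the points $\hat p_0,\dots,\hat p_{r-1}$ on the unit circumference (with the same angular positions) simply by the dilation $p_k = s\,\hat p_k$; a translation of the center plays no role since all the quantities $|p_k-p_0|$ are differences. Hence each factor scales as $|p_k-p_0| = s\,|\hat p_k - \hat p_0|$. Taking the product over $k=1,\dots,r-1$ gives
\[
\prod_{k=1}^{r-1}|p_k-p_0| = s^{\,r-1}\prod_{k=1}^{r-1}|\hat p_k-\hat p_0|.
\]
Then I would take logarithms of both sides and invoke Lemma \ref{prod_puntos_circunf_1}, which gives $\log\prod_{k=1}^{r-1}|\hat p_k-\hat p_0| = \log r$. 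Combining, $\log\prod_{k=1}^{r-1}|p_k-p_0| = (r-1)\log s + \log r$, which is exactly the claim.

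There is essentially no obstacle here: the only thing to be slightly careful about is that "equidistributed points in a circumference of radius $s$" means the points share the same angular spacing $2\pi/r$ as in the unit-circle model (possibly up to a common phase, which is harmless because rotations are isometries and commute with the dilation), so the reduction is legitimate. This is why the corollary is labeled an immediate consequence of the lemma; a one-line scaling computation suffices.
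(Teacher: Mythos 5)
Your scaling argument is correct and is precisely the reduction the paper has in mind: the paper states the corollary as an immediate consequence of Lemma \ref{prod_puntos_circunf_1}, and dilating the unit-circle configuration by $s$ so that each distance picks up a factor $s$, hence the product a factor $s^{r-1}$, is exactly that one-line deduction. Nothing further is needed.
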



\begin{prop}\label{ImpCond_cota_denom}
Let $p \in \mathcal{P}_N$ be fixed and let $S_N=S_N(p)$ be as in \eqref{Sum_SN}. For $M\geqslant 5$, the following inequality holds
\begin{equation*}
\log \displaystyle\prod_{\substack{p_i\in\mathcal{P}_N\\p_i\neq p}}^N |p_i-p| \geqslant S_N + \log (2\sqrt{2}M) -\frac18.
\end{equation*}
\end{prop}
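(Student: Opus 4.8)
The plan is to mirror the structure of the numerator bound (Proposition \ref{ImpCond_cota_num}), but now with the point $p=p_{j_0,k_0}$ fixed inside one of the parallels and the product taken over the remaining points. Write $p\in Q_{\ell}$ (so $\ell$ is the index of the parallel containing $p$, not merely the band). I would split the double product $\prod_{p_i\neq p}|p_i-p|$ into three pieces: (i) the contribution of the $r_\ell-1$ other points in the \emph{same} parallel $Q_\ell$ as $p$; (ii) the contribution of the parallels $Q_j$ with $j<\ell$; and (iii) the contribution of the parallels $Q_j$ with $j>\ell$. For piece (i) I would invoke Corollary \ref{prod_puntos_circunf_r}: the $r_\ell$ points in $Q_\ell$ lie on a circle of radius $s_\ell=\sqrt{1-h_\ell^2}$, so $\log\prod_{k\neq k_0}|p_{\ell,k}-p| = \log r_\ell + (r_\ell-1)\log s_\ell$. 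Since $r_\ell=4\ell$ and $1-h_\ell^2 = \tfrac{\ell^2}{M^2}\bigl(2-\tfrac{\ell^2}{M^2}\bigr)$, this contributes a term comparable to $\log(4\ell)$ plus $(r_\ell-1)\log s_\ell$; the latter must be matched against the $r_\ell\widetilde I_\ell(p)$ term in $S_N$.

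For pieces (ii) and (iii), the idea is that for a parallel $Q_j$ with $j\neq\ell$ the point $p$ is \emph{not} on that parallel, so Lemma \ref{lem:xey} applies with the roles reversed and gives $\prod_{k=0}^{r_j-1}|p_{j,k}-p|^2 = x^{2r_j}+y^{2r_j}-2x^{r_j}y^{r_j}\cos(r_j\varphi)\geqslant |x^{r_j}-y^{r_j}|^2$, where $x=x_{h_j,h_\ell}$, $y=y_{h_j,h_\ell}$ in the notation of the previous section (with $c=h_\ell$). For $j<\ell$ we have $h_j>h_\ell$, hence $x\geqslant y$, and $\log|x^{r_j}-y^{r_j}| = r_j\log x + \log(1-(y/x)^{r_j})$, so the leading term $r_j\log x_{h_j,h_\ell} = r_j\widetilde I_j(p)$ is exactly the corresponding summand of $S_N$; symmetrically for $j>\ell$ with $y\geqslant x$. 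Thus
\[
\log\!\!\prod_{\substack{p_i\in\mathcal P_N\\ p_i\neq p}}\!\!|p_i-p| \;\geqslant\; S_N + \log r_\ell + (r_\ell-1)\log s_\ell - r_\ell\widetilde I_\ell(p) + \tfrac12\!\sum_{j\neq\ell}\log\bigl(1-(\min/\max)^{r_j}\bigr),
\]
and it remains to show the correction terms add up to at least $\log(2\sqrt2 M) - \log(4\ell) - \tfrac18 + \log(4\ell)$, i.e. that $(r_\ell-1)\log s_\ell - r_\ell\widetilde I_\ell(p) + \log r_\ell \geqslant \log(2\sqrt2 M) - \tfrac18 + \tfrac12\sum_{j\neq\ell}\log\bigl(1-(\cdot)^{r_j}\bigr)^{-1}$. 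Here $r_\ell\widetilde I_\ell(p)$ should be computed from \eqref{Valor_Itildej} with $c=h_\ell$ (the degenerate case $h_j=c$), which makes $\widetilde I_\ell(p) = \tfrac12\log(1-h_\ell^2) + $ a harmless correction, so $(r_\ell-1)\log s_\ell - r_\ell\widetilde I_\ell(p)$ is essentially $-\log s_\ell = -\log\sqrt{1-h_\ell^2}$, and together with $\log r_\ell=\log(4\ell)$ one gets $\log\bigl(4\ell/\sqrt{1-h_\ell^2}\bigr) = \log\bigl(4M/\sqrt{2-\ell^2/M^2}\bigr)\geqslant \log(2\sqrt2 M)$ because $\sqrt{2-\ell^2/M^2}\leqslant\sqrt2$. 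This is precisely where the constant $2\sqrt2 M$ comes from.

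The remaining work — and the main obstacle — is to control the tail sums $\tfrac12\sum_{j\neq\ell}\log\bigl(1-(y/x)^{r_j}\bigr)$ (for $j<\ell$) and the analogous $j>\ell$ sum, i.e. to show the well-separation of $\mathcal P_N$ forces each ratio $(y/x)^{r_j}$ to be small enough that $-\sum_{j\neq\ell}\log(1-(\cdot)^{r_j})\leqslant\tfrac18$ with room to spare. Using $-\log(1-\alpha)\leqslant \alpha/(1-\alpha)\leqslant 2\alpha$ for $\alpha\leqslant\tfrac12$, this reduces to bounding geometric-type sums $\sum_{j<\ell}\bigl(\tfrac{(1+h_\ell)(1-h_j)}{(1-h_\ell)(1+h_j)}\bigr)^{r_j/2}$ and $\sum_{j>\ell}\bigl(\tfrac{(1-h_\ell)(1+h_j)}{(1+h_\ell)(1-h_j)}\bigr)^{r_j/2}$, which after substituting $h_j=1-j^2/M^2$, $r_j=4j$ become exactly the type of sums $\sum (j^2/(\ell(\ell\pm1)))^{2j}$ and $\sum(\ell(\ell\pm1)/j^2)^{2j}$ already estimated via Lemmas \ref{L_acot_sum_alpha} and \ref{L_acot_sum_2alpha} in the proof of Proposition \ref{ImpCond_cota_num} (the only novelty is that the "diagonal" $j=\ell$ term is now absent, which can only help). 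One must be slightly careful near $\ell=1$ and near $j$ adjacent to $\ell$, where individual ratios are closest to $1$; checking $\alpha\leqslant\tfrac12$ in those boundary cases, and handling the degenerate evaluation of $\widetilde I_\ell$ at $c=h_\ell$ cleanly, are the points requiring the most care. Assembling these pieces and bounding $\log 2$, the numerical constants arithmetically yields the stated inequality $S_N+\log(2\sqrt2 M)-\tfrac18$.
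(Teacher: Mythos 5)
Your skeleton is exactly the paper's: you split off the $r_\ell-1$ points sharing the parallel of $p$ and handle them with Corollary \ref{prod_puntos_circunf_r}, you lower bound each other parallel by $|x_{h_j,h_\ell}^{r_j}-y_{h_j,h_\ell}^{r_j}|$ via Lemma \ref{lem:xey}, you recognize that the leading terms $r_j\log x_{h_j,h_\ell}$ (for $j\leqslant\ell$) and $r_j\log y_{h_j,h_\ell}$ (for $j>\ell$) reassemble into $S_N(p)$ through \eqref{Valor_Itildej} including the degenerate $j=\ell$ evaluation, and you correctly locate the constant: $\log\bigl(r_\ell/\sqrt{1-h_\ell^2}\bigr)=\log\bigl(4M/\sqrt{2-\ell^2/M^2}\bigr)\geqslant\log(2\sqrt2\,M)$. (Minor slip: since $\prod_i|p-q_{j,i}|\geqslant|x^{r_j}-y^{r_j}|$, the correction is $\log\bigl(1-(\min/\max)^{r_j}\bigr)$ without your factor $\tfrac12$; you in fact state the correct requirement $-\sum_{j\neq\ell}\log\bigl(1-(\cdot)^{r_j}\bigr)\leqslant\tfrac18$ afterwards.)

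The gap is in the one step you yourself flag as the main obstacle and then dismiss ``with room to spare'': the inequality $-\log(1-\alpha)\leqslant2\alpha$ is too lossy to reach $\tfrac18$. Since here $c=h_\ell$ exactly, the ratios are $\bigl(\tfrac{(2M^2-\ell^2)j^2}{\ell^2(2M^2-j^2)}\bigr)^{2j}$ and their mirror images (note: $\ell^2$, not $\ell(\ell\pm1)$ as in your sketch; those belong to the numerator proposition where $c$ ranges over a band), bounded by $(j/\ell)^{4j}$, $(\ell/j)^{4j}$ and $(j/M)^{4j}$. For $\ell=2$ and $M$ large the sum of these quantities approaches roughly $\tfrac1{16}+(4/9)^6+\cdots\approx0.07$, so your factor $2$ yields about $0.14>\tfrac18$ and the claimed bound fails (it is also only borderline at $\ell=3$). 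The paper closes this by first checking that every individual ratio is at most $\tfrac1{16}$ (the extreme case being $j=\ell-1$ with $\ell=2$), which licenses the sharper bound $\log(1-\alpha)\geqslant-\tfrac{16}{15}\alpha$ on $[0,1/16]$, and then invoking Lemmas \ref{L_acot_sum_alpha} and \ref{L_acot_sum_2alpha} to get $\tfrac{16}{15}\bigl(\sum_{j<\ell}(j/\ell)^{4j}+\sum_{j>\ell}(\ell/j)^{4j}+\sum_{j<M}(j/M)^{4j}\bigr)\leqslant\tfrac18$. So the route is the right one, but as written the quantitative closing does not go through; you need the $1/16$ verification and the $16/15$ constant (or a separate exact treatment of the adjacent terms $j=\ell\pm1$) to finish.
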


\begin{proof}
We may assume, without loss of generality, that $p=q_{\ell,0}=(\sqrt{1-h_{\ell}^2},0,h_{\ell})$ is a point belonging to our set $\mathcal{P}_N$ located in the $\ell$--th parallel, with $1 \leqslant \ell \leqslant M$. As before, we denote by $q_{j,0},\ldots,q_{j,r_j-1}$ the $r_j$ equidistributed points in the $j$--th parallel. We write
\begin{align}
\nonumber\log \displaystyle\prod_{\substack{p_i\in\mathcal{P}_N\\p_i\neq p}}^N |p_i-p| =& \log \prod_{i=1}^{r_{\ell}-1}|q_{{\ell},0}-q_{\ell,i}|+
\nonumber \log\prod_{\substack{j=1 \\ j\neq \ell}}^{2M-1}\prod_{i=0}^{r_j-1}|q_{\ell,0}-q_{j,i}|\\
\geqslant & \log 2\sqrt{2}M + r_{\ell}\log x_{h_{\ell},h_{\ell}} + \sum_{\substack{j=1 \\ j \neq \ell}}^{2M-1}\log| x_{h_j,h_{\ell}}^{r_j}-y_{h_j,h_{\ell}}^{r_j}|, \label{cota_den_cond_1}
\end{align}
where we have used on the one hand that, by Lemma \ref{lem:xey},
$$
\log\prod_{\substack{j=1 \\ j\neq \ell}}^{2M-1}\prod_{i=0}^{r_j-1}|q_{\ell,0}-q_{j,i}| \geqslant \sum_{\substack{j=1 \\ j \neq \ell}}^{2M-1}\log| x_{h_j,h_{\ell}}^{r_j}-y_{h_j,h_{\ell}}^{r_j}|,
$$
with
\begin{align*}
x_{h_j,h_{\ell}}=&\sqrt{1-h_{\ell}}\sqrt{1+h_j},\\
y_{h_j,h_{\ell}}=&\sqrt{1+h_{\ell}}\sqrt{1-h_j},
\end{align*}
and on the other hand that, from Corollary \ref{prod_puntos_circunf_r}
\begin{align*}
\log \prod_{i=1}^{r_{\ell}-1}|q_{\ell,0}-q_{\ell,i}| =& \log r_{\ell} + (r_{\ell}-1)\log \sqrt{1-h_{\ell}^2}\\
=& \log \frac{r_{\ell}}{\sqrt{1-h_{\ell}^2}}+r_{\ell}\log x_{h_{\ell},h_{\ell}}\\
\geqslant & \log (2\sqrt{2}M) + r_{\ell}\log x_{h_{\ell},h_{\ell}}.
\end{align*}
For $1 \leqslant j \leqslant \ell-1$, $h_j > h_{\ell}$ so $x_{h_j,h_{\ell}} > y_{h_j,h_{\ell}}$. Reciprocally, $x_{h_j,h_{\ell}} < y_{h_j,h_{\ell}}$ for $\ell \leqslant j \leqslant 2M-1$. Thus
\begin{align*}
\sum_{\substack{j=1 \\ j \neq \ell}}^{2M-1}\log| x_{h_j,h_{\ell}}^{r_j}-y_{h_j,h_{\ell}}^{r_j}|
=& \sum_{j=1}^{\ell-1}r_j\log x_{h_j,h_{\ell}}+ \sum_{j=\ell+1}^{2M-1}r_j\log y_{h_j,h_{\ell}}\\
& + \sum_{j=1}^{\ell-1}\log| 1-y_{h_j,h_{\ell}}^{r_j}/x_{h_j,h_{\ell}}^{r_j}|
+ \sum_{j=\ell+1}^{2M-1}\log| 1-x_{h_j,h_{\ell}}^{r_j}/y_{h_j,h_{\ell}}^{r_j}|,
\end{align*}
and substituting into \eqref{cota_den_cond_1} we obtain
\begin{align}
\nonumber \log \displaystyle\prod_{\substack{p_i\in\mathcal{P}_N\\p_i\neq p}}^N |p_i-p|  \geqslant & \log (2\sqrt{2}M) + S_N + T_{\ell,j},\\
T_{\ell,j}& = \sum_{j=1}^{\ell-1}\log| 1-y_{h_j,h_{\ell}}^{r_j}/x_{h_j,h_{\ell}}^{r_j}|
+ \sum_{j=\ell+1}^{2M-1}\log| 1-x_{h_j,h_{\ell}}^{r_j}/y_{h_j,h_{\ell}}^{r_j}|, \label{log_cotden}
\end{align}
since by \eqref{Valor_Itildej} and \eqref{Sum_SN}
$$
\sum_{j=1}^{\ell}r_j\log x_{h_j,h_{\ell}}+ \sum_{j=\ell+1}^{2M-1}r_j\log y_{h_j,h_{\ell}} = \sum_{j=1}^{2M-1}r_j \Tilde{I}_j (q_{\ell,0}) = S_N(p).
$$
Next, we use
\begin{equation}\label{des_log_m}
\log(1-\alpha)\geqslant -\frac{16}{15}\alpha, \quad \alpha \in [0,1/16],
\end{equation}
to estimate \eqref{log_cotden} (note that for $\ell=1$, the first sum vanishes).
It is not difficult to check that we can use \eqref{des_log_m} to get the following bounds for \eqref{log_cotden}.

\begin{align*}
-T_{\ell,j} \leqslant &
\frac{16}{15}\displaystyle\sum_{j=1}^{\ell-1} \left(\frac{y_{h_{j},h_{\ell}}}{x_{h_{j},h_{\ell}}}\right)^{r_j} +\frac{16}{15}\displaystyle\sum_{j=\ell+1}^M\left(\frac{x_{h_{j},h_{\ell}}}{y_{h_{j},h_{\ell}}}\right)^{r_j}+\frac{16}{15}\displaystyle\sum_{j=M+1}^{2M-1}\left(\frac{x_{h_{j},h_{\ell}}}{y_{h_{j},h_{\ell}}}\right)^{r_j}\\
= & \frac{16}{15}\displaystyle\sum_{j=1}^{\ell-1}\left(\frac{(2M^2-\ell^2)j^2}{\ell^2(2M^2-j^2)}\right)^{2j} + \frac{16}{15}\displaystyle\sum_{j=\ell+1}^M
\left(\frac{\ell^2(2M^2-j^2)}{(2M^2-\ell^2)j^2} \right)^{2j}\\
& \hspace{0.5cm}+\frac{16}{15}\displaystyle\sum_{j=1}^{M-1} \left(\frac{\ell^2j^2}{(2M^2-\ell^2)(2M^2-j^2)}\right)^{2j}\\
\leqslant & \frac{16}{15}\displaystyle\sum_{j=1}^{\ell-1} \left(\frac{j}{\ell}\right)^{4j}+ \frac{16}{15}\displaystyle\sum_{j=\ell+1}^M \left(\frac{\ell}{j}\right)^{4j}+ \frac{16}{15}\displaystyle\sum_{j=1}^{M-1} \left(\frac{j}{M}\right)^{4j}\\
\leqslant & \frac1{8},
\end{align*}
where we have used lemmas \ref{L_acot_sum_alpha} and \ref{L_acot_sum_2alpha}. The proposition follows.
\end{proof}

We will use the following easy consequence in the last section:
\begin{corollary}\label{cor:D}
Let $p$ be any point of $\mathcal P_N$. Then
\[
\displaystyle\prod_{\substack{p_i\in\mathcal{P}_N\\p_i\neq p}}^N |p_i-p|  \geqslant \sqrt{2N}e^{-\kappa N}e^{-9/8}.
\]
\end{corollary}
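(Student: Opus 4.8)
The plan is to obtain Corollary \ref{cor:D} by simply chaining together the two main results of this section and the previous one. Proposition \ref{ImpCond_cota_denom} already gives, for any $p\in\mathcal P_N$ and $M\geqslant 5$,
\[
\log\prod_{\substack{p_i\in\mathcal{P}_N\\p_i\neq p}}|p_i-p|\geqslant S_N+\log(2\sqrt2\,M)-\frac18 ,
\]
where $S_N=S_N(p)$ and, using the equatorial symmetry of $\mathcal P_N$, we may assume that $p$ lies on a parallel $Q_\ell$ with $\ell\leqslant M$. So the only thing missing is to replace $S_N$ by something expressed in terms of $-\kappa N$.

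That replacement is exactly the content of Proposition \ref{Cociente_Integral_cond_previo} (equivalently Lemma \ref{const_cond}, since $T(\ell)\geqslant0$): its left-hand inequality yields the clean bound $S_N+N\kappa\geqslant -1+\tfrac13\log\frac{M+1}{\ell+1}\geqslant -1$, the last step because $\ell\leqslant M$. Substituting $S_N\geqslant -\kappa N-1$ into the displayed bound above gives
\[
\log\prod_{\substack{p_i\in\mathcal{P}_N\\p_i\neq p}}|p_i-p|\geqslant -\kappa N+\log(2\sqrt2\,M)-\frac98 .
\]
Finally I would use the bookkeeping identity $2\sqrt2\,M=\sqrt{2N}$, valid because $N=4M^2$, and exponentiate to reach precisely $\prod_{p_i\neq p}|p_i-p|\geqslant \sqrt{2N}\,e^{-\kappa N}e^{-9/8}$.

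None of these steps is hard: the corollary is essentially an ``immediate consequence'' assembled from Propositions \ref{ImpCond_cota_denom} and \ref{Cociente_Integral_cond_previo}, and the real work — controlling the perturbation terms $T_{\ell,j}$ arising from the factors $1-(x_{h_j,h_\ell}/y_{h_j,h_\ell})^{r_j}$ by geometric-type sums, and invoking Corollary \ref{prod_puntos_circunf_r} for the same-parallel contribution — was already carried out in proving Proposition \ref{ImpCond_cota_denom}. The one point requiring a remark is the standing hypothesis $M\geqslant 5$ inherited from both propositions; for the finitely many small cases $M\in\{1,2,3,4\}$ one can either check the inequality by a direct elementary computation of the pairwise distances in $\mathcal P_N$, or note that these cases are anyway handled separately in Section \ref{sec:final} via the $\min(N,\cdot)$ form of the main theorem, so the statement as used there is unaffected.
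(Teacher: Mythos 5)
Your proposal is correct and is exactly the paper's argument: the paper's proof is literally ``immediate from Propositions \ref{ImpCond_cota_denom} and \ref{Cociente_Integral_cond_previo}'', and you have simply spelled out the chaining $S_N\geqslant -\kappa N-1$, the identity $2\sqrt{2}\,M=\sqrt{2N}$, and the (sensible) remark about the $M\geqslant 5$ hypothesis and the small cases handled separately in Section \ref{sec:final}.
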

\begin{proof}
  Immediate from  propositions  \ref{ImpCond_cota_denom} and \ref{Cociente_Integral_cond_previo}.
\end{proof}


\section{Proof of the main results}\label{sec:final}

If $M\leqslant 4$ our proof is computer assisted: we construct the polynomial $P_N$, (that has rational coefficients) and the point set (whose points are algebraic and can thus be represented exactly in a computer algebra package), which allows us to compute exactly $\mu_{norm}$ from \eqref{eq:mu}, showing that its actually upper bounded by $N=4M^2$.
For $M\geqslant 5$, from \eqref{exp_cond_S2} and the symmetry of the construction, we have
\begin{align*}
\mu_{\text{norm}}(P)&=\frac{1}{2}\sqrt{N(N+1)}\max_{1 \leqslant i \leqslant N}\frac{\left(\int_{\mathbb{S}} \prod_{j=1}^N |p- p_j|^2 d\sigma(p)\right)^{1/2}}{\prod_{j\neq i}| p_i- p_j|}\\
&\stackrel{\text{Cor. \ref{cor:D}}}{\leqslant}\frac{1}{2}\sqrt{N(N+1)}\frac{\left(\sum_{\ell=1}^{2M-1}\int_{B_\ell} \prod_{j=1}^N |p- p_j|^2 d\sigma(p)\right)^{1/2}}{\sqrt{2N}e^{-\kappa N}e^{-9/8}}\\
&=\frac{1}{2}\sqrt{N(N+1)}\frac{\left(\int_{B_M} \prod_{j=1}^N |p- p_j|^2 d\sigma(p)+2\sum_{\ell=1}^{M-1}\int_{B_\ell} \prod_{j=1}^N |p- p_j|^2 d\sigma(p)\right)^{1/2}}{\sqrt{2N}e^{-\kappa N}e^{-9/8}}
\end{align*}
Using Corollary \ref{cor:C} and recalling that the relative area of $B_\ell$ is $r_\ell/N=\ell/M^2$, the term inside the parenthesis is bounded above  by
\begin{align*}
4e^{-2\kappa N}e^{3/2}\left(\frac{1}{M}\left(\frac{e}{2}\right)^{4/M}+\frac{2}{M^{4/3}}\sum_{\ell=1}^{M-1}\ell^{1/3}\left(\frac{e}{2}\right)^{4/\ell}\right),\\
\stackrel{\text{Lemma \ref{lem:sum3}}}{\leqslant} 4e^{-2\kappa N}e^{3/2}\left(\frac{1}{M}\left(\frac{e}{2}\right)^{4/M}+\frac32+\frac{24(1-\log 2)}{M}+\frac{6}{M^{4/3}}\right).
\end{align*}

We then have proved:
\begin{align*}
\mu_{\text{norm}}(P)&\leqslant \sqrt{\frac{N+1}2}e^{3/4+9/8}\left(\frac{1}{M}\left(\frac{e}{2}\right)^{4/M}+\frac32+\frac{24(1-\log 2)}{M}+\frac{6}{M^{4/3}}\right)^{1/2}.
\end{align*}
This proves our Theorem \ref{th:mainintro}: the term inside the parenthesis decreases with $M$ and conclude after some arithmetic:
\[
\mu_{\text{norm}}(P)\leqslant\frac{19}{2}\sqrt{N+1},
\]
which is less than $N$ for $M\geqslant 5$. Moreover, we also get a proof of Corollary \ref{cor:limsinf} since in the limit $M\to\infty$ we have
\[
\mu_{\text{norm}}(P)\leqslant \frac{\sqrt{3}}{2}e^{3/4+9/8}\sqrt{N+1}.
\]


\appendix

\section{Some technical results used in the proofs}

The following formula is a consequence of the change of variables theorem, sending a point $(a,b,c)\in\mathbb S$ to the cylinder $((a^2+b^2)^{-1/2}a,(a^2+b^2)^{-1/2}b,c)$:

\begin{lemma}\label{int_S2}
Let $f$ be integrable on $[-1,1]$. Then,
$$
\int_{\mathbb{S}}f(\langle x, (0,0,1)\rangle)d\sigma(x) = \frac12\int_{-1}^1 f(t)dt.
$$
\end{lemma}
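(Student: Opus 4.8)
The plan is to reduce the surface integral to an iterated integral via an explicit area-preserving parametrization of $\mathbb{S}$, after which the statement collapses to a one-variable change of variables.

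First I would excise the two poles $(0,0,\pm 1)$, which form a set of $d\sigma$-measure zero, and parametrize the rest of $\mathbb{S}$ by $\phi\in(0,\pi)$ and $\theta\in[0,2\pi)$ through $x=(\sin\phi\cos\theta,\sin\phi\sin\theta,\cos\phi)$. Equivalently, one can invoke the cylinder map $(a,b,c)\mapsto\bigl((a^2+b^2)^{-1/2}a,(a^2+b^2)^{-1/2}b,c\bigr)$ mentioned in the statement, whose Jacobian computation shows it transports $d\sigma$ to the normalized lateral-surface measure $\tfrac{1}{4\pi}\,d\theta\,dz$ on the cylinder $\{a^2+b^2=1,\ -1<z<1\}$ (this is Archimedes' hat-box identity). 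In either description the normalized surface measure reads $d\sigma=\tfrac{1}{4\pi}\sin\phi\,d\phi\,d\theta$, while $\langle x,(0,0,1)\rangle=\cos\phi$ depends on $\phi$ alone.

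Then, since $f$ is integrable on $[-1,1]$ and hence $\phi\mapsto f(\cos\phi)\sin\phi$ is integrable on $(0,\pi)$, Fubini's theorem lets the $\theta$-integral factor out:
\[
\int_{\mathbb{S}}f(\langle x,(0,0,1)\rangle)\,d\sigma(x)=\frac{1}{4\pi}\int_0^{2\pi}\!\!\int_0^{\pi}f(\cos\phi)\sin\phi\,d\phi\,d\theta=\frac12\int_0^{\pi}f(\cos\phi)\sin\phi\,d\phi.
\]
Finally, the substitution $t=\cos\phi$, $dt=-\sin\phi\,d\phi$, which is a $C^1$-diffeomorphism of $(0,\pi)$ onto $(-1,1)$, turns the last integral into $\tfrac12\int_{-1}^{1}f(t)\,dt$, as claimed.

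There is essentially no serious obstacle here; the only points worth a word of care are that the two poles are $d\sigma$-null (so removing them is harmless) and that the one-variable change of variables is legitimate for a merely Lebesgue-integrable $f$ precisely because $\phi\mapsto\cos\phi$ is a monotone $C^1$ diffeomorphism — no continuity of $f$ is needed.
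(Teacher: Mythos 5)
Your proof is correct and follows essentially the same route as the paper, which simply invokes the change of variables theorem via the cylinder map (Archimedes' hat-box identity); your spherical-coordinate computation with the substitution $t=\cos\phi$ is just the explicit verification of that same fact. The extra care about the poles being $\sigma$-null and about integrability under the monotone change of variables is fine but not needed beyond what the paper's one-line argument already presumes.
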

We have also used the following elementary estimate.
\begin{lemma}\label{lem:sumalog}
Let $M\geqslant 2$ and $1\leqslant \ell\leqslant M-1$. Then,
\[
\log\frac{M+1}{\ell+1}\leqslant \sum_{j=\ell+1}^M\frac1j\leqslant \log\frac{M}{\ell}
\]
\end{lemma}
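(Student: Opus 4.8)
The plan is to bound the harmonic partial sum by comparison with the integral of $1/x$, exploiting that $x\mapsto 1/x$ is decreasing on $(0,\infty)$.

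First I would record the elementary two-sided estimate for a single term: for every integer $j\geqslant 1$,
\[
\int_{j}^{j+1}\frac{dx}{x}\;\leqslant\;\frac1j\;\leqslant\;\int_{j-1}^{j}\frac{dx}{x},
\]
where the left inequality holds because $1/x\leqslant 1/j$ on $[j,j+1]$ and the right one because $1/x\geqslant 1/j$ on $[j-1,j]$ (the latter uses $j\geqslant 1$, so the interval stays in $(0,\infty)$). Then I would sum these inequalities over $j=\ell+1,\dots,M$, which is a nonempty range of positive integers since $1\leqslant\ell\leqslant M-1$. Telescoping the integrals gives, from the left inequality,
\[
\log\frac{M+1}{\ell+1}=\int_{\ell+1}^{M+1}\frac{dx}{x}=\sum_{j=\ell+1}^{M}\int_{j}^{j+1}\frac{dx}{x}\;\leqslant\;\sum_{j=\ell+1}^{M}\frac1j,
\]
and from the right inequality,
\[
\sum_{j=\ell+1}^{M}\frac1j\;\leqslant\;\sum_{j=\ell+1}^{M}\int_{j-1}^{j}\frac{dx}{x}=\int_{\ell}^{M}\frac{dx}{x}=\log\frac{M}{\ell},
\]
which uses $\ell\geqslant 1$ so that $\int_\ell^M dx/x$ is finite. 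Combining the two displays yields the claimed chain of inequalities.

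There is no real obstacle here: the only points to be careful about are that the summation range is nonempty and that all integration intervals avoid the origin, both of which are guaranteed by the hypotheses $M\geqslant 2$ and $1\leqslant\ell\leqslant M-1$. One could alternatively phrase the same argument via the monotonicity of $\sum_{j=\ell+1}^M 1/j - \log M$ (or via convexity of $-\log$), but the integral comparison is the most direct route and needs no further lemmas.
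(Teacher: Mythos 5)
Your proof is correct and follows essentially the same route as the paper: comparison of the harmonic partial sum with the integrals $\int_{\ell+1}^{M+1}\frac{dx}{x}$ for the lower bound and $\int_{\ell}^{M}\frac{dx}{x}$ (the paper writes this as $\int_{\ell+1}^{M+1}\frac{dx}{x-1}$) for the upper bound. Nothing is missing; the care you take about the nonempty summation range and the intervals avoiding the origin is exactly what the hypotheses $M\geqslant 2$, $1\leqslant\ell\leqslant M-1$ provide.
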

\begin{proof}
  This follows from the comparison of the sum and the associated integral: $\int_{\ell+1}^{M+1}\frac{1}{x}\,dx$ for the lower bound and $\int_{\ell+1}^{M+1}\frac{1}{x-1}\,dx$ for the upper bound.
\end{proof}
\section{Some discrete sums}
In this section we prove some technical, elementary estimates that have been used in the proofs of the paper.


\begin{lemma}\label{L_acot_sum_alpha}
For $M\geqslant 2$, let
\begin{equation*}\label{serie_ppa}
R(M)=\displaystyle\sum_{j=1}^{M-1} \left(\frac{j}{M}\right)^{4j}.
\end{equation*}
Then, $R(M)\leqslant 1/16$ for all $M\geqslant 2$, and $R(M)\leqslant 1/30$ for $M\geqslant 5$.
\end{lemma}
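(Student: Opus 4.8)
The plan is to bound the series $R(M)=\sum_{j=1}^{M-1}(j/M)^{4j}$ by isolating the last few dominant terms and crudely dominating the tail. First I would observe that each summand satisfies $(j/M)^{4j}\leqslant (j/M)^{4}$ only for $j=1$, which is far too weak; the right idea is that the base $j/M$ is bounded away from $1$ except when $j$ is close to $M$, and the exponent $4j$ grows, so the terms with small or moderate $j$ are extremely small. Concretely, for $j\leqslant M-1$ we have $j/M\leqslant (M-1)/M$, hence $(j/M)^{4j}\leqslant ((M-1)/M)^{4j}=\bigl(1-\tfrac1M\bigr)^{4j}$. Summing the resulting geometric-type bound $\sum_{j\geqslant 1}(1-1/M)^{4j}$ gives a closed form $\frac{(1-1/M)^4}{1-(1-1/M)^4}$, which one checks is below $1/16$ for $M\geqslant 2$; however this single estimate is probably not tight enough for the sharper claim $R(M)\leqslant 1/30$ when $M\geqslant 5$, so for that regime I would split off the top term $j=M-1$ (which equals $((M-1)/M)^{4(M-1)}$, a quantity decreasing in $M$ and at most $(4/5)^{16}$ when $M\geqslant 5$) and for the remaining indices $1\leqslant j\leqslant M-2$ use the stronger bound $j/M\leqslant (M-2)/M=1-2/M$, so that $(j/M)^{4j}\leqslant (1-2/M)^{4j}$ and the geometric sum $\sum_{j\geqslant 1}(1-2/M)^{4j}$ is again explicit and monotone in $M$.

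The key steps, in order, are: (1) establish the uniform bound $j/M\leqslant (M-1)/M$ for the full range and sum the geometric series to get a clean upper bound $g(M)=\frac{(1-1/M)^4}{1-(1-1/M)^4}$; (2) verify $g(M)$ is increasing in $M$ with limit $\sum$ diverging—wait, actually $g(M)\to\infty$ as $M\to\infty$, so this crude bound alone fails for large $M$. So the correct structure is: keep the sum finite by noting $j\leqslant M-1$ bounds the number of terms, and instead use that $(j/M)^{4j}$ decays super-geometrically. A cleaner route: write $(j/M)^{4j}=e^{4j\log(j/M)}=e^{-4j\log(M/j)}$ and use $\log(M/j)\geqslant (M-j)/M$ (since $\log(1+x)\geqslant x/(1+x)$ type inequalities), giving $(j/M)^{4j}\leqslant e^{-4j(M-j)/M}$; the exponent $j(M-j)/M$ is minimized at the endpoints $j=1$ and $j=M-1$ where it equals $(M-1)/M\geqslant 1/2$, so every term is at most $e^{-2}$—still not summable-to-small without more care. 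The honest plan is therefore the two-regime split I described: for $M\in\{2,3,4,5,\dots\}$ up to some threshold, evaluate $R(M)$ directly (finitely many numbers), and for $M$ beyond the threshold use the $j\leqslant M-2$ versus $j=M-1$ split with the geometric majorant $\sum_{j\geqslant 1}(1-2/M)^{4j}=\frac{(1-2/M)^4}{1-(1-2/M)^4}$, which does tend to a finite-looking expression—and in fact one shows it plus $(4/5)^{16}$ stays under $1/30$ for $M\geqslant 5$ by checking monotonicity and a single numerical value.

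I expect the main obstacle to be getting a majorant that is simultaneously (a) monotone in $M$ so that a single evaluation closes the case for all large $M$, and (b) small enough to beat the constants $1/16$ and $1/30$. The naive geometric bound $\sum_{j\geqslant1}(1-1/M)^{4j}$ is monotone increasing to $+\infty$, so it is useless for large $M$; the fix is to exploit the growing exponent, e.g. bound $(j/M)^{4j}\leqslant (j/M)^{4}\cdot\bigl(\max_{2\leqslant j\leqslant M-1}(j/M)\bigr)^{4(j-1)}$ is wrong too. The genuinely safe argument: for $j\leqslant M-1$, $(j/M)^{4j}\leqslant (j/M)^{4}$ is false, but $(j/M)^{4j}\leqslant ((M-1)/M)^{4j}$ and the $\ell^1$ norm of that sequence over the finite range $1\leqslant j\leqslant M-1$ is bounded by $\frac{(1-1/M)^4}{1-(1-1/M)^4}$, and crucially one caps this because for the paper's actual use (Lemma~\ref{lem:sum3} and the final section) only $R(M)\leqslant 1/16$ globally and $\leqslant 1/30$ for $M\geqslant 5$ are needed, and both can be obtained by: checking $M=2,3,4$ by hand, checking $M=5,6$ by hand, and for $M\geqslant 7$ bounding $R(M)$ by its value at a representative $M$ after verifying that termwise each $(j/M)^{4j}$ as a function of $M$ (for fixed $j$, with $j\leqslant M-1$) is—this is the delicate monotonicity point—not monotone, so instead one sums the tail $\sum_{j=1}^{M-1}(1-1/M)^{4j}\leqslant \sum_{j=1}^{\infty} e^{-4j/M}$ is again bad. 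Given these traps, the realistic proof will lean on a direct term count: the dominant contribution is $j=M-1$ contributing $\leqslant (4/5)^{16}<0.003$ for $M\geqslant5$, the term $j=M-2$ contributes $\leqslant (3/5)^{12}$, and the geometric-ratio estimate $(j/M)^{4j}\big/((j+1)/M)^{4(j+1)}$ shows the sum is dominated by its last term up to a bounded factor, which after a short computation yields the stated bounds; verifying that ratio estimate cleanly is where the real work lies.
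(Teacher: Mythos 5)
Your final plan does not close, and several of your intermediate claims are false. The geometric majorants you propose are useless uniformly in $M$: both $\sum_{j\geqslant1}(1-1/M)^{4j}$ and $\sum_{j\geqslant1}(1-2/M)^{4j}$ have ratio tending to $1$, hence grow like $M/4$ and $M/8$ as $M\to\infty$ (and already at $M=2$ the first equals $1/15>1/16$), so the assertion that the second ``does tend to a finite-looking expression'' is wrong. The fallback you end with --- that the consecutive ratio $\bigl((j+1)/M\bigr)^{4(j+1)}\big/(j/M)^{4j}=(1+1/j)^{4j}\bigl((j+1)/M\bigr)^{4}$ shows the sum is dominated by its last term up to a bounded factor --- also fails: this ratio drops below $1$ as soon as $j+1<M/2$, and indeed $t_j=(j/M)^{4j}$ \emph{decreases} in $j$ up to $j\approx M/e$ and only then increases, so there is no uniform geometric domination from the top, and the contribution of the small and middle range of $j$ is exactly the part you never bound (you yourself flag it as ``where the real work lies''). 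Finally, your numerics for the dominant term are off by an order of magnitude: $(4/5)^{16}\approx 0.028$, not $<0.003$; since $1/30\approx 0.0333$ and $R(5)\approx 0.0326$, the margin at $M=5$ is about two percent, so loose constants are fatal there --- the case $M=5$ must be settled by exact computation, and for larger $M$ the top terms must be kept essentially exactly while everything else is shown to be genuinely tiny.

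What is missing, then, is a uniform-in-$M$ bound for the portion of the sum where neither $j$ nor $M-j$ is of size $O(1)$. The paper's proof supplies exactly this: it checks $M=2,3,4,5$ by rational computation, and for $M\geqslant 6$ it observes that $R(M)$ is $M$ times a composite midpoint rule for the convex function $x\mapsto x^{4Mx}$, so after peeling off the last three terms and the $j=1$ term one gets $R(M)\leqslant M\int_{3/(2M)}^{1-7/(2M)}x^{4Mx}\,dx+\sum_{j=M-3}^{M-1}(j/M)^{4j}+M^{-4}$; the integral is estimated by splitting at $x=1/e$, where the integrand changes monotonicity, yielding at most $1/240$, while the peeled-off terms are decreasing in $M$ and at most $7/240$ for $M\geqslant 6$. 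Your qualitative picture (the top terms behave like $e^{-4k}$, small $M$ checked by hand) is consistent with this, but without an argument of that type for the middle range your proposal does not establish $R(M)\leqslant 1/30$ for $M\geqslant 6$, nor even $R(M)\leqslant 1/16$ for all $M$.
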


\begin{proof}

It is easy to check with some rational computations that $R(5)\leqslant \cdots\leqslant R(3)\leqslant R(2)=1/16$ and also that $R(5)\leqslant 1/30$. We finish the proof by showing that $R(M)\leqslant 1/30$ for $M\geqslant 6$. Indeed, note that the sum in the lemma is $M$ times a composite midpoint rule of the convex function $x^{4Mx}$ and we thus have:

\begin{equation*}
R(M) \leqslant M\int_{\frac{3}{2M}}^{1-\frac{7}{2M}} x^{4Mx}dx+\displaystyle\sum_{j=M-3}^{M-1} \left(\frac{j}{M}\right)^{4j}+\frac{1}{M^4},
\end{equation*}
and we bound each of the previous terms. On the one hand, it is easy to see that
\begin{equation*}
\displaystyle\sum_{j=M-3}^{M-1} \left(\frac{j}{M}\right)^{4j}+\frac{1}{M^4}
\end{equation*}
is decreasing in $M$ and thus it is bounded above by $7/240$ for $M\geqslant 6$. On the other hand, the function $x^{4Mx}$ is decreasing in $x$ in the interval $[3/2M,e^{-1}]$ and is increasing for $[e^{-1},1-7/2M]$, so
\begin{equation*}
M\int_{\frac{3}{2M}}^{1-\frac{7}{2M}} x^{4Mx}dx \leqslant \left(\frac{3}{2}\right)^6\int_{\frac{3}{2M}}^{\frac{1}{e}}\frac{1}{M^5}dx+M\int_{\frac{1}{e}}^{1-\frac{7}{2M}} x^{4M/e}dx.
\end{equation*}
Solving the immediate integrals above we get an increasing function in $M$ whose upper bound is $1/240$.
The sum of the two upper bounds obtained is less than or equal to $1/30$ as claimed.
\end{proof}


\begin{lemma}\label{L_acot_sum_2alpha}
For  $1 \leqslant \ell \leqslant M-2$, we have
\begin{equation*}
\displaystyle\sum_{j=\ell+2}^{M}\left(\frac{\ell+1}{j}\right)^{4j} \leqslant \frac{1}{e^4-1}.
\end{equation*}
\end{lemma}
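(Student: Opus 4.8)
The plan is to prove the bound $\sum_{j=\ell+2}^{M}\left(\frac{\ell+1}{j}\right)^{4j}\leqslant \frac{1}{e^4-1}$ by dominating each term by a geometric series with ratio $e^{-4}$. First I would set $k=\ell+1$ so that the sum runs over $j\geqslant k+1$, and examine the ratio $\left(\frac{k}{j}\right)^{4j}$. The key observation is that for $j\geqslant k+1$ one has $\frac{k}{j}\leqslant \frac{j-1}{j}=1-\frac{1}{j}$, and hence
\[
\left(\frac{k}{j}\right)^{4j}\leqslant\left(1-\frac1j\right)^{4j}\leqslant e^{-4},
\]
using the elementary inequality $(1-1/j)^{j}\leqslant e^{-1}$ valid for all $j\geqslant 1$. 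This already bounds the first term ($j=k+1$) by $e^{-4}$.

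The heart of the argument is to show that the terms decay at least geometrically with ratio $e^{-4}$, i.e. that
\[
a_{j+1}:=\left(\frac{k}{j+1}\right)^{4(j+1)}\leqslant e^{-4}\left(\frac{k}{j}\right)^{4j}=:e^{-4}a_j
\]
for all $j\geqslant k+1$. Taking logarithms, this is equivalent to
\[
4(j+1)\log\frac{k}{j+1}\leqslant -4+4j\log\frac kj,
\]
that is, $j\log\frac{j}{j+1}+\log\frac{k}{j+1}\leqslant -1$, or after rearranging, $-j\log\left(1+\frac1j\right)+\log\frac{k}{j+1}\leqslant -1$. Since $j\log(1+1/j)\leqslant 1$ (again from $(1+1/j)^j\leqslant e$... wait, this gives $\leqslant 1$ in the wrong direction, so I must be slightly more careful). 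Precisely: the inequality to establish is $\log\frac{j+1}{k}\geqslant 1-j\log\frac{j+1}{j}$; since $j\log\frac{j+1}{j}\in[\log 2,1)$ is increasing to $1$, the right side is positive but at most $1-\log 2<1$, while the left side is $\geqslant\log\frac{j+1}{k}\geqslant\log\frac{k+2}{k}$ when $j\geqslant k+1$. For $k\geqslant 3$ this last quantity is too small, so the naive term-by-term geometric bound needs the sharper form $j\log(1+1/j)\geqslant 1-\frac{1}{2j}$, giving right side $\leqslant\frac{1}{2j}\leqslant\frac{1}{2(k+1)}$, and one checks $\log\frac{k+2}{k}\geqslant\frac{2}{k}-\ldots\geqslant\frac{1}{2(k+1)}$ for all $k\geqslant 1$. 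I would carry this out with the standard estimates $\frac{x}{1+x}\leqslant\log(1+x)\leqslant x-\frac{x^2}{2}+\frac{x^3}{3}$ to make the monotonicity of $a_j/a_{j+1}$ rigorous.

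Once the geometric decay $a_{j+1}\leqslant e^{-4}a_j$ is in hand for all $j\geqslant k+1$, together with $a_{k+1}\leqslant e^{-4}$, summing yields
\[
\sum_{j=\ell+2}^{M}\left(\frac{\ell+1}{j}\right)^{4j}=\sum_{j=k+1}^{M}a_j\leqslant\sum_{n=0}^{\infty}e^{-4}\cdot e^{-4n}=\frac{e^{-4}}{1-e^{-4}}=\frac{1}{e^4-1},
\]
which is exactly the claim. I expect the main obstacle to be establishing the geometric-ratio inequality $a_{j+1}\leqslant e^{-4}a_j$ cleanly for the borderline cases (small $j$ close to $k$), where the naive bound $j\log(1+1/j)\leqslant 1$ is too lossy and one genuinely needs a second-order correction term; everything else is routine. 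An alternative, possibly simpler, route that avoids the delicate ratio estimate is to bound $\left(\frac kj\right)^{4j}\leqslant\left(\frac{k}{k+1}\right)^{4j}\cdot\left(\frac{k+1}{j}\right)^{4j}\cdot\ldots$ — but the direct comparison $\frac kj\leqslant\frac{j-1}{j}$ combined with $\left(1-\frac1j\right)^{4j}$ being decreasing in $j$ for $j\geqslant 1$ is likely the most economical, since it shows $a_j\leqslant\left(1-\frac1j\right)^{4j}\leqslant\left(1-\frac{1}{k+1}\right)^{4(k+1)}$ only gives a fixed bound, not decay, so the ratio argument seems unavoidable and that is where I would concentrate the care.
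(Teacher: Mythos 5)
Your argument is correct: the ratio inequality $a_{j+1}\leqslant e^{-4}a_j$ does hold for every $j\geqslant k+1$ (your second-order estimate $j\log(1+1/j)\geqslant 1-\tfrac{1}{2j}$, together with $\log\tfrac{k+2}{k}\geqslant\tfrac{2}{k+2}\geqslant\tfrac{1}{2(k+1)}$, closes the borderline case $j=k+1$), and combined with $a_{k+1}\leqslant e^{-4}$ it sums to the stated bound. However, this is a genuinely different and considerably heavier route than the paper's. The paper bounds each term directly: writing $\frac{\ell+1}{j}=1-\frac{j-\ell-1}{j}$ and using $1-x\leqslant e^{-x}$ gives at once
\[
\left(\frac{\ell+1}{j}\right)^{4j}=\left(1-\frac{j-\ell-1}{j}\right)^{4j}\leqslant e^{-4(j-\ell-1)},
\]
so the sum is dominated by $\sum_{m\geqslant 1}e^{-4m}=\frac{1}{e^4-1}$ with no induction, no ratio comparison, and no Taylor expansion beyond $\log(1-x)\leqslant -x$. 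Your difficulty arises because the first step $\frac kj\leqslant\frac{j-1}{j}$ discards the distance $j-(\ell+1)$, leaving only the constant bound $e^{-4}$ per term, so the decay must then be recovered from the ratio $a_{j+1}/a_j$, which is exactly where the delicate second-order correction is needed; the paper keeps that distance in the exponent from the start and the geometric decay falls out termwise. Both arguments yield the same constant, so the only cost of your route is length and the care required at the borderline term, which you correctly identified as the crux.
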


\begin{proof}
Note that for all $j\geqslant \ell+1$
\[
\left(\frac{\ell+1}{j}\right)^{4j}=\left(1-\frac{j-\ell-1}{j}\right)^{4j}\leqslant e^{-4(j-\ell-1)},
\]
which yields the following upper bound for the sum in the lemma:
\begin{equation*}
\displaystyle\sum_{j=\ell+2}^{M}e^{-4(j-\ell-1)}\leqslant \sum_{k=1}^\infty e^{-4k}=\frac{1}{e^4-1}.
\end{equation*}

\end{proof}

The following sum has appeared in the proof of our main theorem:
\begin{lemma}\label{lem:sum3}
The following inequality holds:
\[
\sum_{\ell=1}^{M-1}\ell^{1/3}\left(\frac{e}{2}\right)^{4/\ell}\leqslant \frac34M^{4/3}+12(1-\log 2)M^{1/3}+3.
\]
\end{lemma}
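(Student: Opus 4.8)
The plan is to bound the sum $\sum_{\ell=1}^{M-1}\ell^{1/3}(e/2)^{4/\ell}$ by splitting off the small-$\ell$ contributions, where $(e/2)^{4/\ell}$ is genuinely larger than $1$, from the bulk $\ell\geqslant 2$ (or some small cutoff), where one can afford to estimate the exponential factor crudely. First I would note that $(e/2)^{4/\ell}=\exp\bigl(\frac{4}{\ell}(1-\log 2)\bigr)$, so writing $\beta=4(1-\log 2)$ we have $(e/2)^{4/\ell}=e^{\beta/\ell}$ with $\beta\approx 1.227$. For $\ell\geqslant 2$ one has $\beta/\ell\leqslant \beta/2<1$, so the elementary inequality $e^{x}\leqslant 1+\frac{x}{1-x}\cdot\text{(something)}$ is available; cleanly, I would use $e^{\beta/\ell}\leqslant 1+\frac{\beta}{\ell}\,e^{\beta/2}$ for $\ell\geqslant 2$, or even more simply $e^{\beta/\ell}\leqslant 1+\frac{c}{\ell}$ for a suitable constant $c$ valid on $\ell\geqslant 2$, and then for $\ell=1$ just bound the single term $1\cdot e^{\beta}=(e/2)^4$ separately by an explicit constant.

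The key computation is then to combine these pieces into the stated shape $\tfrac34 M^{4/3}+12(1-\log 2)M^{1/3}+3$. Writing $\ell^{1/3}e^{\beta/\ell}\leqslant \ell^{1/3}+c\,\ell^{-2/3}$ for $\ell\geqslant 2$, I would use the integral comparisons $\sum_{\ell=1}^{M-1}\ell^{1/3}\leqslant \int_0^{M} x^{1/3}\,dx=\tfrac34 M^{4/3}$ and $\sum_{\ell=1}^{M-1}\ell^{-2/3}\leqslant \int_0^{M} x^{-2/3}\,dx=3M^{1/3}$ (both because $x^{1/3}$ is increasing and $x^{-2/3}$ is decreasing and integrable near $0$, giving the right monotone sum/integral bounds). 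To get exactly the coefficient $12(1-\log 2)=3\beta$ on the $M^{1/3}$ term, the right choice is $c=\beta e^{\beta/2}\cdot(\text{correction})$; actually the clean route is to observe $e^{\beta/\ell}-1\leqslant \frac{\beta}{\ell}\cdot\frac{e^{\beta/2}-1}{\beta/2}$ is too lossy, so instead I would bound $e^{\beta/\ell}\leqslant e^{\beta/2}$ for $\ell=1$ only and for $\ell\geqslant 2$ use $e^{\beta/\ell}\leqslant 1+\frac{\beta}{\ell}\cdot\frac{1}{1-\beta/2}\leqslant 1+\frac{4(1-\log2)}{\ell}$ after checking $\frac{1}{1-\beta/2}\leqslant\frac{4(1-\log2)}{\beta}=1$, i.e. $\beta\leqslant\cdots$; whichever inequality makes the arithmetic land on the constant $3$, the structure is the same. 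The leftover constant term $3$ absorbs the $\ell=1$ term $(e/2)^4\approx 1.37$ together with the discrepancy between $\sum_{\ell=1}^{M-1}$ and the integral up to $M$.

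The main obstacle I expect is purely bookkeeping: choosing the intermediate bound $e^{\beta/\ell}\leqslant 1+(\text{const})/\ell$ with a constant small enough that, after the integral comparison multiplies it by the $3M^{1/3}$ coming from $\sum \ell^{-2/3}$, the result is at most $12(1-\log 2)M^{1/3}$, while simultaneously the accumulated $O(1)$ errors (the $\ell=1$ term, the slack in the sum-to-integral comparisons, the boundary term $(M-1)^{1/3}$ versus $M^{1/3}$) total at most $3$. Since $12(1-\log2)=3\cdot 4(1-\log 2)=3\beta$ exactly, the natural target is $e^{\beta/\ell}\leqslant 1+\frac{\beta}{\ell}$ for $\ell\geqslant 2$ — but that is false since $e^{x}>1+x$; the fix is that we only need it after multiplication by $\ell^{1/3}$ and summation, so a slightly weaker per-term inequality like $e^{\beta/\ell}\leqslant 1+\frac{\beta}{\ell}+\frac{\beta^2}{\ell^2}$ (valid since $e^x\leqslant 1+x+x^2$ for $x\leqslant 1$, applicable as $\beta/\ell\leqslant\beta/2<1$) works, with the $\sum \ell^{1/3-2}=\sum\ell^{-5/3}\leqslant\zeta(5/3)$ piece folded into the constant $3$. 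Assembling these bounds and verifying the final numerics ($\beta^2\zeta(5/3)+(e/2)^4+1\leqslant 3$ up to the monotonicity slack) completes the proof; no step is deep, but the constants must be chased carefully.
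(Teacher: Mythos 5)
Your skeleton is essentially the paper's: write $(e/2)^{4/\ell}=e^{\beta/\ell}$ with $\beta=4(1-\log 2)\approx 1.227$, expand the exponential, and compare the resulting sums $\sum\ell^{1/3}$, $\sum\ell^{-2/3}$, $\sum\ell^{1/3-k}$ with integrals (the paper keeps the whole series, interchanges the two summations, bounds $\sum_{\ell=1}^{M-1}\ell^{1/3-k}\leqslant 1+\tfrac{3}{3k-4}\leqslant\tfrac52$ uniformly in $k\geqslant 2$, and gets the constant $\tfrac52(e^{\beta}-1-\beta)\approx 2.96\leqslant 3$). But your final assembly does not close as written, and the budget of $3$ is tight enough that this is a real gap, not a detail. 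First, $(e/2)^4=e^{\beta}\approx 3.41$, not $\approx 1.37$; so if you split off the $\ell=1$ term, that single term already exceeds the entire additive constant $3$ on its own. Second, $\beta^2\zeta(5/3)\approx 1.51\cdot 2.13\approx 3.20>3$, so the quadratic-tail piece alone also overshoots. Hence the verification you propose, $\beta^2\zeta(5/3)+(e/2)^4+1\leqslant 3$, is false (the left-hand side is about $7.6$), and the phrase ``up to the monotonicity slack'' is carrying the whole proof: the argument only closes if that slack is actually quantified, which you never do. (The intermediate inequalities floated in your second paragraph, e.g.\ $e^{\beta/\ell}\leqslant e^{\beta/2}$ at $\ell=1$ or $\tfrac{1}{1-\beta/2}\leqslant 1$, are false and should be discarded; only the $e^x\leqslant 1+x+x^2$ route is viable.)

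The gap is repairable inside your framework. Cleanest fix: do not treat $\ell=1$ separately. The inequality $e^x\leqslant 1+x+x^2$ holds well beyond $x=1$ (up to $x\approx 1.79$), in particular at $x=\beta$, since $e^{\beta}=e^4/16\approx 3.412\leqslant 1+\beta+\beta^2\approx 3.734$. Then $\sum_{\ell=1}^{M-1}\ell^{1/3}e^{\beta/\ell}\leqslant\sum_{\ell=1}^{M-1}\ell^{1/3}+\beta\sum_{\ell=1}^{M-1}\ell^{-2/3}+\beta^2\zeta(5/3)$, and using $\sum_{\ell=1}^{M-1}\ell^{1/3}\leqslant\int_1^M x^{1/3}\,dx=\tfrac34(M^{4/3}-1)$ together with $\sum_{\ell=1}^{M-1}\ell^{-2/3}\leqslant 3M^{1/3}$, the constant term is at most $\beta^2\zeta(5/3)-\tfrac34\approx 2.45\leqslant 3$. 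Alternatively, keep $\ell=1$ apart but then explicitly bank the slack, e.g.\ $\sum_{\ell=2}^{M-1}\ell^{-2/3}\leqslant 3M^{1/3}-3$ and $\sum_{\ell=2}^{M-1}\ell^{1/3}\leqslant\tfrac34(M^{4/3}-2^{4/3})$, which brings the total constant to roughly $-0.5$. Either repair gives the stated lemma; as submitted, though, the decisive numerical step is wrong and the proof is incomplete.
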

\begin{proof}
  Write
  \[
  \left(\frac{e}{2}\right)^{4/\ell}=e^{4(1-\log 2)/\ell}=1+\frac{4(1-\log 2)}{\ell}+\sum_{k=2}^\infty \frac{(4(1-\log 2))^k}{k!\ell^k},
  \]
  and interchange the summation symbols to get the following expression, equivalent to the sum in the lemma:
  \[
  \sum_{\ell=1}^{M-1}\ell^{1/3}+4(1-\log 2)\sum_{\ell=1}^{M-1}\ell^{-2/3}+\sum_{k=2}^\infty \frac{(4(1-\log 2))^k}{k!}\sum_{\ell=1}^{M-1}\ell^{1/3-k}.
  \]
  We can upper bound the first of these three sums by $\int_1^Mx^{1/3}\,dx\leqslant 3M^{4/3}/4$, and the second one by $1+\int_1^{M-1}x^{-2/3}dx\leqslant 3M^{1/3}$. The third term is at most
  \begin{multline*}
  \sum_{k=2}^\infty \frac{(4(1-\log 2))^k}{k!}\left(1+\int_1^{M-1}x^{1/3-k}\,dx\right)\leqslant \\
  \sum_{k=2}^\infty \frac{(4(1-\log 2))^k}{k!}\left(1+\frac{3}{3k-4}\right)\\
  \leqslant \frac52\left(\frac{e^4}{2^4}-1-4(1-\log 2)\right)\leqslant 3,
  \end{multline*}
  where we have used some arithmetic for the last step.
\end{proof}

\section*{Acknowledgements}

The authors were partially supported by Ministerio de Econom\'ia y Competitividad, Gobierno de Espa\~na, through grants MTM2017-83816-P, MTM2017-90682-REDT, and by the Banco de Santander and Universidad de Cantabria grant \, 21.SI01.64658.


\end{document}